\def\IC{{\mathbb C}}
\def\IE{{\mathbb E}}
\def\IL{{\mathbb L}}
\def\IP{{\mathbb P}}
\def\IR{{\mathbb R}}
\def\IZ{{\mathbb Z}}
\def\n{\noindent}
\def\dsl{\textstyle\sum\limits}
\def\dis{\displaystyle}
\def\o{\omega}
\def\fr{\mbox{\footnotesize $\dis\frac{1}{2}$}}
\def\ov{\overline}
\def\f{\footnotesize}
\def\r{\rightarrow}
\def\point{{\mbox{\large $.$}}}
\def\wh{\widehat}
\def\wt{\widetilde}
\def\ve{\varepsilon}
\def\cA{{\cal A}}
\def\cB{{\cal B}}
\def\cL{{\cal L}}
\def\cT{{\cal T}}
\def\cI{{\cal I}}
\def\cF{{\cal F}}
\def\cS{{\cal S}}
\def\cV{{\cal V}}
\def\cW{{\cal W}}
\newtheorem{theorem}{Theorem}[section]
\newtheorem{lemma}[theorem]{Lemma}
\newtheorem{corollary}[theorem]{Corollary}
\newtheorem{proposition}[theorem]{Proposition}
\newtheorem{remark}[theorem]{Remark}
\begin{document}

\baselineskip14pt
\noindent

\begin{center}
{\bf ON SCALING LIMITS AND BROWNIAN INTERLACEMENTS}
\end{center}

\vspace{0.5cm}

\begin{center}
Alain-Sol Sznitman$^*$
\end{center}

\bigskip

\bigskip
\begin{abstract}
We consider continuous time interlacements on $\IZ^d$, $d\ge 3$, and investigate the scaling limit of their occupation times. In a suitable regime, referred to as the {\it constant intensity regime}, this brings Brownian interlacements on $\IR^d$ into play, whereas in the {\it high intensity regime} the Gaussian free field shows up instead. We also investigate the scaling limit of the isomorphism theorem of \cite{Szni12b}. As a by-product, when $d=3$, we obtain an isomorphism theorem for Brownian interlacements.
\end{abstract}

\vspace{8cm}

\noindent
Departement Mathematik\\  
ETH-Zurich\\
CH-8092 Z\"urich\\
Switzerland

\vfill
\noindent
$\overline{~~~~~~~~~~~~~~~~~~~~~~~}$\\
$^*$ {\small This research was supported in part by the grant ERC-2009-AdG  245728-RWPERCRI}

\newpage

\thispagestyle{empty}
~

\newpage
\setcounter{page}{1}

\setcounter{section}{-1}
\section{Introduction}
Informally, random interlacements provide a model for the local structure left at appropriately chosen time scales by random walks on large recurrent graphs, which are locally transient. Their vacant set has non-trivial percolative properties, see for instance \cite{DrewRathSapo12}, \cite{SidoSzni09}, \cite{Szni10a}, \cite{Teix11}, which are useful in the study of certain disconnection and fragmentation problems, see~\cite{CernTeix12}, \cite{CernTeixWind11}, \cite{Szni09b}, \cite{TeixWind11}. Their connectivity properties have been actively investigated, see \cite{CernPopo12}, \cite{LacoTyke12}, \cite{ProcTyke11}, \cite{RathSapo12}, \cite{RathSapo11a}. Random interlacements have further been helpful in some questions of cover times, see~\cite{Beli11}, \cite{Beli12c}. They have also been linked to Poisson gases of Markovian loops, see~\cite{Leja12}, \cite{Szni11d}. Recently, connections between random interlacements and Gaussian free fields have emerged, which underline the important role of occupation times of random interlacements, see~\cite{Szni12b}.

\medskip
In this article, we investigate the scaling limit of the field of occupation times of continuous time interlacements on $\IZ^d$, $d \ge 3$. In the {\it constant intensity regime}, this brings into play the Brownian interlacements on $\IR^d$, $d \ge 3$. In the {\it high intensity regime}, the massless Gaussian free field shows up instead. Further, we investigate the scaling limit of the isomorphism theorem derived in \cite{Szni12b}. Dimension three plays a special role, and when $d=3$, we obtain as a limit an identity in law relating the occupation-time measure of Brownian interlacements on $\IR^3$ to the massless Gaussian free field, somewhat in the spirit of \cite{Leja10}, \cite{Leja12}, in the context of Poisson gases of Brownian loops at half-integer levels.

\medskip
We now discuss our results in more detail. We consider continuous time random interlacements on $\IZ^d$, $d \ge 3$. In essence, this is a Poisson point process on a certain state space consisting of doubly infinite $\IZ^d$-valued trajectories marked by their duration at each step, modulo time-shift. A non-negative parameter $u$ plays the role of a multiplicative factor of the intensity measure of this Poisson point process, which is defined on a suitable canonical space, see \cite{Szni11c}, denoted here by $(\ov{\Omega},\ov{\cA},\ov{\IP})$. The field of occupation times of random interlacements at level $u$ is denoted by $L_{x,u}(\o)$, for $x\in \IZ^d$, $u \ge 0$, $\o \in \ov{\Omega}$. It records the total duration spent at $x$ by the trajectories modulo time-shift with label at most $u$ in the cloud $\o$, see \cite{Szni11c}.

\medskip
We investigate the scaling limit for the field of occupation times of random interlacements on $\IZ^d$, $d \ge 3$, and introduce the random measure on $\IR^d$
\begin{equation}\label{0.1}
\cL^N = \mbox{\f $\dis\frac{1}{dN^2}$} \; \dsl_{x \in \IZ^d} L_{x,u_N} \,\delta_{\frac{x}{N}}, \; N \ge 1,
\end{equation}
where $(u_N)_{N \ge 1}$ is a suitably chosen sequence of positive levels. In the {\it constant intensity regime}, that is when
\begin{equation}\label{0.2}
u_N = d \alpha  N^{2-d}, \;\; \mbox{with} \; \alpha > 0
\end{equation}

\n
(in this case the intensity measure $\ov{\IE}[\cL^N]$ of $\cL^N$ converges vaguely to $\alpha \, dy$ as $N$ goes to infinity), we show in Theorem \ref{theo3.2} that
\begin{equation}\label{0.3}
\mbox{$\cL^N$ converges in distribution to $\cL_\alpha$, as $N \r \infty$},
\end{equation}
where $\cL_\alpha$ stands for the occupation-time measure of Brownian interlacements at level $\alpha$, see (\ref{2.37}). We construct Brownian interlacements in Section 2 using a similar strategy as in \cite{Szni10a} or \cite{Teix09b}. This is technically somewhat more involved in the present context, and the key identity is encapsulated in Lemma \ref{lem2.1}. The random measure $\cL_\alpha$ is supported by the random closed set $\cI_0^\alpha$, the {\it Brownian fabric at level $\alpha$}, which is the union of the traces in $\IR^d$ of the doubly infinite trajectories modulo time-shift with label at most $\alpha$ in the Poisson cloud defining the Brownian interlacements, see (\ref{2.30}). This random closed subset of $\IR^d$ is a.s. connected when $d=3$, but a.s. disconnected when $d \ge 4$, see Proposition \ref{prop2.5}.

\medskip
On the other hand in the {\it high intensity regime}, that is when
\begin{equation}\label{0.4}
N^{d-2} u_N \r \infty ,
\end{equation}

\n
we show in Theorem \ref{theo3.3} and Corollary \ref{cor3.5} that
\begin{align}
\wh{\cL}^N = \sqrt{\mbox{\f $\dis\frac{d}{2N^{2-d}u_N}$}} \;\big(\cL^N - \ov{\IE}[\cL^N]\big) & \; \mbox{converges in distribution to the} \label{0.5}
\\[-2ex]
&\; \mbox{massless Gaussian free field $\Phi$ on $\IR^d$}, \nonumber
\end{align}

\n
which is the canonical generalized random field on $\cS'(\IR^d)$, the space of tempered distributions on $\IR^d$, such that for any $V$ in the Schwartz space $\cS(\IR^d)$, the pairing $\langle \Phi, V\rangle$ is a centered Gaussian variable with variance $\int V(y) G(y-y') V(y')dy dy'$, with $G(\cdot)$ the Green function of Brownian motion on $\IR^d$, $d \ge 3$, see (\ref{1.3}).

\medskip
The {\it low intensity regime}, when $N^{d-2} u_N \r 0$, leads to a null limit for $\cL^N$ in (\ref{0.1}), and will not be further discussed in the present work.

\medskip
From the isomorphism theorem for random interlacements, see \cite{Szni12b}, one knows that when $(\varphi_x)_{x \in \IZ^d}$ is a discrete massless Gaussian free field, i.e.~a centered Gaussian field with covariance $E[\varphi_x\,\varphi_{x'}] = g(x-x')$, with $g(\cdot)$ the Green function of simple random walk on $\IZ^d$, see (\ref{1.1}), independent from $(L_{x,u})_{x \in \IZ^d}$, one has for any $u \ge 0$ the distributional identity:
\begin{equation}\label{0.6}
\Big(\fr \;\varphi^2_x + L_{x,u}\Big)_{x \in \IZ^d} \stackrel{\rm law}{=} \Big(\fr \,(\varphi_x + \sqrt{2u})^2\Big)_{x \in \IZ^d}.
\end{equation}

\n
We explain in Section 4 how one quickly recovers (\ref{0.5}) from this identity in the regime where, in essence, as $N \r \infty$, the variance of $\sum_{|x| \le N} \varphi_x^2$ is negligible compared to that of $\sqrt{u}_N$ $\sum_{|x| \le N} \varphi_x$. This corresponds to the full range (\ref{0.4}) when $d = 3$, but only the partial range of (\ref{0.4}) where $u_N N^2 \log N \r \infty$, when $d = 4$, and $u_N N^2 \r \infty$, when $d \ge 5$, see Remark \ref{rem4.2}. This fact singles out the special role of dimension 3, and motivates the investigation when $d=3$ of the scaling limit of (\ref{0.6}) (with adequate counter terms) in the constant intensity regime (\ref{0.2}). Indeed, when $d = 3$, Theorem \ref{theo5.1} roughly states that for any $\alpha \ge 0$ (denoting Wick products by {\large : \!\!:}, see Section 5)
\begin{equation}\label{0.7}
\mbox{\f $\dis\frac{1}{dN^2}$} \;\dsl_{x \in \IZ^d} \mbox{{\large :}} \big(\varphi_x + \sqrt{2u_N}\big)^2\mbox{{\large :}} \; \mbox{$\delta_{\frac{x}{N}}$ converges in distribution to {\large :}$\big(\Phi + \sqrt{2 \alpha}\big)^2${\large :}}\,,
\end{equation}
with $u_N$ as in (\ref{0.2}), and the last term defined by regularization of the massless Gaussian free field $\Phi$ on $\IR^3$ (crucially using the fact that the Green function is locally square integrable when $d = 3$), see the beginning of Section 5. As a scaling limit of (\ref{0.6}) we obtain the identity in law on $\cS'(\IR^3)$, for $\alpha \ge 0$:
\begin{equation}\label{0.8}
\fr\; \mbox{{\large :}} \Phi^2\mbox{{\large :}} + \cL_\alpha = \fr \;\mbox{{\large :}}\big(\Phi + \sqrt{2 \alpha}\big)^2\mbox{{\large :}}\,,
\end{equation}

\n
where $\cL_\alpha$ is independent of $\Phi$ and denotes, as above, the occupation-time measure of Brownian interlacements on $\IR^3$ at level $\alpha$, see Corollary \ref{cor5.3}. This identity in law has a similar spirit to some of the results in \cite{Leja10}, \cite{Leja12} in the context of Poisson gases of Brownian loops at half-integer levels, see Remark \ref{rem5.4}.

Let us describe how the article is organized. In Section 1 we collect some useful facts concerning Green functions, occupation times of random interlacements on $\IZ^d$, and discrete as well as continuous free fields. Section 2 is of independent interest. It constructs the Brownian interlacements on $\IR^d$, $d \ge 3$, and derives some of their key properties, see Propositions \ref{prop2.4}, \ref{prop2.5}, \ref{prop2.6}. In Section 3, we investigate the limit of $\cL^N$, respectively $\wh{\cL}^N$, in the constant intensity, respectively, high intensity regime. The main results appear in Theorems \ref{theo3.2}, \ref{theo3.3} and Corollary \ref{cor3.5}. In the short Section 4 we use the isomorphism theorem (\ref{0.6}) as a mean to recover (\ref{0.5}). Section 5 focuses on the three-dimensional situation. The scaling limit of (\ref{0.6}) (with adequate counter terms) is investigated in the constant intensity regime (\ref{0.2}). The main results appear in Theorem \ref{theo5.1} and Corollary \ref{cor5.3}.

\medskip
Finally, let us explain our convention concerning constants. We denote by $c,c',\wt{c},\ov{c}$ positive constants changing from place to place, which simply depend on $d$. Numbered constants $c_0,c_1,\dots$ refer to the value corresponding to their first appearance in the text. Dependence of constants on additional parameters appears in the notation.

\section{Some useful facts}
\setcounter{equation}{0}

In this section, we introduce additional notation, and collect some useful facts concerning Green functions, occupation times of random interlacements on $\IZ^d$, $d \ge 3$, and massless Gaussian free fields on $\IZ^d$ and $\IR^d$, $d \ge 3$.

\medskip
We write $|\cdot|$, respectively $|\cdot|_\infty$, for the Euclidean, respectively, the supremum norm on $\IR^d$. Throughout, we tacitly assume $d \ge 3$. We let $B(y,r)$, $y \in \IR^d$, $r \ge 0$, stand for the closed Euclidean ball with center $y$ and radius $r$.

\medskip
We denote by $g(\cdot,\cdot)$  the Green function of simple random walk on $\IZ^d$, that is, for $x,x' \in \IZ^d$, $g(x,x')$ is the expected time spent at $x'$ for the discrete time, simple random walk starting at $x$. The function $g(\cdot,\cdot)$ is symmetric, and due to translation invariance, one has for $x,x'$ in $\IZ^d$
\begin{equation}\label{1.1}
g(x,x') = g(x-x') = g(x' - x), \; \mbox{where} \; g(\cdot) = g(\cdot,0).
\end{equation}

\n
One knows that $g(\cdot) \le g(0)$ and that when $x$ tends to infinity, cf.~\cite{Lawl91}, p.~31,
\begin{align}
&g(x) \sim d G(x), \; \mbox{where}\label{1.2}
\\[1ex]
&G(y) = \dis\frac{1}{2 \pi^{d/2}} \; \Gamma\Big(\mbox{\f $\dis\frac{d}{2} - 1\Big)$} \,|y|^{2-d}, \;\mbox{for $y \in \IR^d$}, \label{1.3}
\end{align}

\medskip\n
and ``$\sim$'' in (\ref{1.2}) means that the ratio of the two members tends to $1$ as $x$ goes to infinity. Writing $G(y,y') = G(y-y') = G(y' - y)$, one knows, see \cite{Durr84}, p.~32, that the $(0,\infty]$-valued function $G(\cdot,\cdot)$ is the Green function of Brownian motion on $\IR^d$.

\medskip
For $N \ge 1$, we denote by $\IL_N$ the lattice in $\IR^d$:
\begin{equation}\label{1.4}
\IL_N = \mbox{\f $\dis\frac{1}{N}$} \;\IZ^d.
\end{equation}

\n
For functions $f,h$ on $\IL_N$ such that $\sum_{y \in \IL_N} |f(y)h(y)| < \infty$, we write
\begin{equation}\label{1.5}
\big\langle f,h \big\rangle_{\IL_N} = \mbox{\f $\dis\frac{1}{N^d}$} \;\dsl_{y \in \IL_N} f(y) h(y) .
\end{equation}
We rescale the Green function $g(\cdot,\cdot)$ and define
\begin{equation}\label{1.6}
g_N(y,y') = \mbox{\f $\dis\frac{1}{d}$} \;N^{d-2} g(Ny,Ny'), \;\mbox{for}\; y,y' \in \IL_N,
\end{equation}

\n
as well as $g_N(\cdot) = g_N(\cdot,0)$, so that $g_N(y,y') = g_N(y-y') = g_N(y'-y)$. By (\ref{1.2}), (\ref{1.3}), we also see that
\begin{equation}\label{1.7}
\lim\limits_N \;\sup\limits_{|y| \ge \gamma}\, |g_N(y) - G(y)| = 0, \;\; \mbox{for every} \; \gamma > 0.
\end{equation}
We introduce the linear operator
\begin{equation}\label{1.8}
G_N f(y) = \mbox{\f $\dis\frac{1}{N^d}$} \; \dsl_{y' \in \IL_N} g_N(y,y') \,f(y'), \;y \in \IL_N,
\end{equation}

\n
which is well-defined when the function $f$: $\IL_N \r \IR$ is such that $\sum_{y \in \IL_N} g_N(y,y') \,|f(y')| < \infty$ for some (and hence all) $y$ in $\IL_N$, in particular, when $f$ vanishes outside a finite set.

\medskip
Similarly, when $f,h$ are measurable functions on $\IR^d$ such that $|fh|$ is integrable, we write
\begin{equation}\label{1.9}
\big\langle f,h\big\rangle = \dis\int_{\IR^d} f(y) \,h(y) \,dy.
\end{equation}
We also introduce the linear operator
\begin{equation}\label{1.10}
G f(y) = \dis\int_{\IR^d} G(y,y') \,f(y') \,dy', \; \mbox{for} \; y \in \IR^d,
\end{equation}

\n
which is well-defined when the measurable function $f$ on $\IR^d$ satisfies $\int G(y,y') \,|f(y')| \,dy' < \infty$ for all $y \in \IR^d$, in particular when $f$ is bounded measurable and vanishes outside a bounded set.

\medskip
We now recall an identity for the Laplace transform of the field of occupation times $(L_{x,u})_{x \in \IZ^d}$ of continuous time random interlacements on $\IZ^d$ at level $u \ge 0$. When $V: \IL_N \r \IR$ has finite support, the operator $G_NV$, which is the composition of the multiplication by $V$ with the operator $G_N$ in (\ref{1.8}), sends bounded functions on $\IL_N$ into bounded functions on $\IL_N$. We write $\| G_N V\|_{L^\infty \r L^\infty}$ for the corresponding operator norm (the space of bounded functions on $\IL_N$ being endowed with the sup-norm). One knows from Theorem 2.1 of \cite{Szni11c} that when $V$: $\IL_N \r \IR$ has finite support and $\|G_N \,|V|\,\|_{L^\infty \r L^\infty} < 1$, then for $u \ge 0$,
\begin{equation}\label{1.11}
\ov{\IE} \Big[\exp\Big\{\dsl_{x \in \IZ^d} V\big(\mbox{\f $\dis\frac{x}{N}$}\big) \;\mbox{\f $\dis\frac{1}{dN^2}$} \;L_{x,u}\Big\}\Big] = \exp\Big\{\mbox{\f $\dis\frac{u}{d}$} \;N^{d-2} \big\langle V,(I - G_N V)^{-1} 1\big\rangle_{\IL_N}\Big\}.
\end{equation}

\n
Note  that when $V$ vanishes outside a single point one readily finds by differentiation that:
\begin{equation}\label{1.12}
\ov{\IE}[L_{x,u}] = u, \;\mbox{for $x \in \IZ^d$ and $u \ge 0$}.
\end{equation}

\n
We then turn to the discussion of Gaussian free fields. Recall that we tacitly assume $d \ge 3$. We begin with the discrete case. We endow $\IR^{\IZ^d}$ with the product $\sigma$-algebra, and denote by $(\varphi_x)_{x \in \IZ^d}$ the canonical coordinates. The canonical law $P^g$ of the massless Gaussian free field on $\IZ^d$ is characterized by the fact that
\begin{equation}\label{1.13}
\begin{array}{l}
\mbox{under $P^g$, $(\varphi_x)_{x \in \IZ^d}$ is a centered Gaussian field with covariance}\\
E^{P^g}[\varphi_x \varphi_{x'}] = g(x,x'), \;\mbox{for} \; x,x' \in \IZ^d .
\end{array}
\end{equation}

\n
In the continuous case, we consider the space $\cS'(\IR^d)$ of tempered distributions on $\IR^d$. We endow $\cS'(\IR^d)$ with the cylindrical $\sigma$-algebra generated by the canonical pairings with functions of $\cS(\IR^d)$. We write $\Phi$ for the canonical generalized random field on $\cS'(\IR^d)$ (i.e. the identity map). The symmetric bilinear form on $\cS(\IR^d) \times \cS(\IR^d)$ defined by
\begin{equation}\label{1.14}
E(V,W) = \dis\int V(y) \,G(y-y') \,W(y') \,dy \, dy' \big( = \big\langle V,G W\big\rangle = \big\langle GV, W\big\rangle\big)
\end{equation}
is positive definite (see for instance \cite{Szni98a}, p.~75). It satisfies the bound
\begin{equation*}
|E(V,W)| \le c\,\|V\|_{L^1(\IR^d)}(\|W\|_{L^\infty(\IR^d)} + \|W\|_{L^1(\IR^d)}), \;\mbox{for} \; V,W \in \cS(\IR^d),
\end{equation*}

\n
and hence is continuous (we endow $\cS(\IR^d)$ with its usual Fr\'echet topology, see \cite{Ito84}, p.~6-8). By Minlos' Theorem, see Theorem 2.3, p.~12 of \cite{Simo79}, or Theorem 2.4.1, p.~28 of \cite{Ito84}, there exists a unique probability measure $P^G$ on $\cS'(\IR^d)$ such that
\begin{equation}\label{1.15}
\begin{array}{l}
\mbox{under $P^G$, for each $V \in \cS(\IR^d)$, $\big\langle \Phi, V\big\rangle$ is a centered Gaussian variable}
\\
\mbox{with variance $E(V,V)$.}
\end{array}
\end{equation}

\n
The law $P^G$ describes the massless Gaussian free field on $\IR^d$, see Chapter 6  \S 2 of \cite{GlimJaff81}.

\section{Brownian interlacements}
\setcounter{equation}{0}

In this section we construct the Brownian interlacements on $\IR^d$, $d \ge 3$, introduce their occupation-time measure, and discuss some key properties of these objects. We follow a similar approach as in \cite{Szni10a}, \cite{Teix09b}. However, the continuous set-up is technically more challenging. An important step is the construction of the intensity measure of our basic point process on a space of doubly infinite trajectories modulo time-shift. We  mainly rely on a crucial compatibility property of the measure expressed in ``local charts'', see Lemma 2.1 (cf.~Theorem 1.1 of \cite{Szni10a} and Theorem 2.1 of \cite{Teix09b}), rather than on an approach based on projective limits, see \cite{Weil70}, \cite{Silv74}, following the outline of \cite{Hunt60}.

\medskip
We first need to introduce notation. We recall that we tacitly assume $d \ge 3$. We denote by $W_+$ the subspace of $C(\IR_+,\IR^d)$ of continuous $\IR^d$-valued trajectories tending to infinity at infinite times, and by $W$ the subspace of $C(\IR,\IR^d)$ consisting of continuous bilateral trajectories from $\IR$ into $\IR^d$, which tend to infinity at plus and minus infinite times. We write $X_t$, $t \ge 0$, and $X_t$, $t \in \IR$, for the respective canonical processes, and denote by $\theta_t$, $t \ge 0$, and $\theta_t$, $t \in \IR$, the respective canonical shifts. The spaces $W_+$ and $W$ are endowed with the respective $\sigma$-algebras $\cW_+$ and $\cW$ generated by the canonical processes. Given an open subset $U$ of $\IR^d$, $w \in W_+$, we write $T_U(w) = \inf\{s \ge 0; X_s(w) \notin U\}$ for the exit time of $U$. When $F$ is a closed subset of $\IR^d$, we write $H_F(w) = \inf\{s \ge 0; X_s (w) \in F)\}$, $\wt{H}_F(w) = \inf\{s > 0; X_s (w) \in F\}$, for the respective entrance time and hitting time of $F$. When $w \in W$, we define $H_F(w)$ and $T_U(w)$ by similar formulas, simply replacing the condition $s \ge 0$, by $s \in \IR$.

\medskip
We then consider $W^*$ the set of equivalence classes of trajectories in $W$ modulo time-shift, i.e.~$W^* = W/\sim$, where $w \sim w'$, when $w(\cdot) = w'(\cdot + t)$ for some $t \in \IR$. We denote by $\pi^*$ the canonical map $W \r W^*$, and introduce the $\sigma$-algebra $\cW^* = \{A \subseteq W^*; (\pi^*)^{-1}(A) \in \cW\}$, which is the largest $\sigma$-algebra on $W^*$ such that  $(W, \cW) \stackrel{\pi^*}{\longrightarrow}(W^*,\cW^*)$ is measurable. Incidentally, note that a random variable $Z$ on $W$ invariant under $\theta$ (i.e.~$Z \circ \theta_t = Z$, for all $t$ in $\IR$) determines a unique random variable $Z^*$ on $W^*$ such that  $Z^* \circ \pi^* = Z$. Given a compact subset $K$ of $\IR^d$, we write $W_K$ for the subset of trajectories of $W$ that enter $K$, and $W_K^*$ for its image under $\pi^*$.

\medskip
Since $d \ge 3$, and Brownian motion on $\IR^d$ is transient, we view $P_y$, the Wiener measure starting from $y \in \IR^d$, as defined on $(W_+, \cW_+)$, and write $E_y$ for the corresponding expectation. When $\rho$ is a finite measure on $\IR^d$, we write $P_\rho$ for the Wiener measure with ``initial distribution'' $\rho$ and $E_\rho$ for the corresponding expectation.

\medskip
When $B$ is a closed Euclidean ball of positive radius (as a shorthand in what follows, we will simply write that $B$ is a closed ball), and when $y \notin B$, we define $P_y^B[\cdot] = P_y[\,\cdot \,|H_B = \infty]$ to be the law of Brownian motion starting at $y$ conditioned never to enter $B$, and write $E^B_y$ for the corresponding expectation. When $y \in \partial B$ (the boundary of $B$), as $z$ in $\IR^d \backslash B$ tends to $y$, the measures $P^B_z$ converge weakly (on $C(\IR_+,\IR^d)$) to a measure $P^B_y$ supported on $\{w \in W_+; w(0) = y$ and $w(t) \notin B$ for all $t > 0\}$, which can be represented as the Brownian excursion measure in $\IR^d \backslash B$ starting from $y$ conditioned on the event of finite positive mass that $\{\wt{H}_B = \infty\}$ (see Theorems 4.1 and 2.2 of \cite{Burd87}, and also Theorem 3.1 of \cite{CranMcco83} for bounds on the exit time from small balls centered at $y$ under $P^B_z$). Using rotational invariance and the explicit nature of the conditioning, one can also in a more direct fashion establish the above mentioned facts using a skew product representation of the law $P^B_z$.

\medskip
We write  $p_t(y,y') = \frac{1}{(2 \pi t)^{d/2}} \exp\{- \frac{|y - y'|^2}{2 t}\}$, with $t > 0$, $y, y' \in \IR^d$, for the Brownian transition density, so that with similar notation as below (\ref{1.3})
\begin{equation}\label{2.1}
G(y,y') = \dis\int^\infty_0 p_t(y,y')\,dt, \;\;  \mbox{for} \; y,y' \in \IR^d.
\end{equation}

\n
Given a compact subset $K$ of $\IR^d$, we denote by $e_K$ the equilibrium measure of $K$ (see Proposition 3.3, p.~58 of \cite{Szni98a}, or Theorem 1.10, p.~58 of  \cite{PortSton78}). This measure does not have atoms (see \cite{Szni98a}, p.~58, or \cite{PortSton78}, p.~197). It is supported by the boundary of $K$ (actually, the boundary of the unbounded component of $K^c$, see \cite{PortSton78}, p.~58). Its total mass is the capacity ${\rm cap}(K)$ of $K$. The equilibrium measure $e_K$ is related to the time of the last visit of $K$ via (see Theorem 3.4, p.~60 of \cite{Szni98a}):
\begin{equation}\label{2.2}
P_y[L_K > 0, L_K \in dt, X_{L_K} \in dz] = p_t(y,z) \,e_K(dz) \,dt,
\end{equation}

\n
where for $w \in W_+$ we write $L_K(w) = \sup\{t > 0; w(t) \in K\}$ (with the convention $\sup \phi = 0$), for the time of last visit of $w$ to $K$.

\medskip
For $B$ a closed ball (see above (\ref{2.1}) for the terminology), we introduce the following (finite) measure on $W^0_B = \{w \in W; H_B(w) = 0\}$ ($\subseteq W_B$), the subset of $W$ of bilateral trajectories that enter $B$ at time $0$ for the first time,
\begin{equation}\label{2.3}
Q_B[(X_{-t})_{t \ge 0} \in A', \, X_0 \in dy, \,(X_t)_{t \ge 0} \in A] = e_B(dy) \,P_y^B[A'] \, P_y[A], \;\mbox{for} \; A,A' \in \cW_+.
\end{equation}

\n
The next lemma states an important compatibility property of the above collection of finite measures. In essence, the measures $Q_B$ should be thought of as the expressions in the (natural) ``local charts'' of $W^*$ with respective domains $W^0_B$ ($\subseteq W_B$) of the intensity measure we are trying to construct (see also below (1.15) of \cite{Szni10a}).
\begin{lemma}\label{lem2.1}
Assume that $B,B'$ are closed balls, and $B$ lies in the interior of $B'$, then 
\begin{equation}\label{2.4}
\theta_{H_B} \circ (1\{H_B < \infty\} \,Q_{B'}) = Q_B.
\end{equation}
\end{lemma}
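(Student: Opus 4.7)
The plan is to verify (\ref{2.4}) by pairing both sides against test functions of the product form $F(w) = f(X_0)\,\Phi_+((X_t)_{t \ge 0})\,\Phi_-((X_{-t})_{t \ge 0})$ with $f,\Phi_\pm$ bounded measurable; a monotone-class argument then extends the identity to all of $\cW$. Any $w$ with $H_{B'}(w)=0$ and $H_B(w)<\infty$ satisfies $H_B(w)>0$ and $X_{H_B}(w) \in \partial B$, since $B$ lies in the interior of $B'$. Unwinding the shift $\theta_{H_B}$, the new $X_0$ is $y = X_{H_B}(w)$; the new forward path $(X_{H_B+s}(w))_{s\ge 0}$ is, by the strong Markov property of Brownian motion at $H_B$, distributed as $P_y$ given $y$ and independent of everything else; and the new backward path is the concatenation $\bar\eta \oplus \xi$ of the time-reverse $\bar\eta$ of the forward excursion $(X_s(w))_{s\in [0,H_B]}$ of $w$ (from $z = X_0(w) \in \partial B'$ to $y$) with the $P_z^{B'}$-backward trajectory $\xi$ of $w$ starting at $z$. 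The factor $E_y[\Phi_+]$ thus separates cleanly and matches the corresponding factor in (\ref{2.3}). What remains is to identify the joint law of $(y, \bar\eta \oplus \xi)$ as $e_B(dy) \otimes P_y^B(\cdot)$.

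For the marginal of $y$ under the left-hand side I would use the balayage identity $\int_{\partial B'} e_{B'}(dz)\,P_z[H_B < \infty, X_{H_B} \in dy] = e_B(dy)$. Denoting the left-hand side by $\mu$, Fubini and the strong Markov property give, for $\phi \ge 0$ supported in $B$, that $\int \phi(x) G\mu(x)\,dx = \int e_{B'}(dz)\,E_z[\int_{H_B}^\infty \phi(X_s)\,ds] = \int e_{B'}(dz)\,G\phi(z) = \int \phi(x) Ge_{B'}(x)\,dx = \int \phi(x)\,dx$, where we used $\phi(X_s)=0$ for $s<H_B$ and $Ge_{B'}(x) = P_x[H_{B'}<\infty] = 1$ on $B \subset B'$. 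Hence $G\mu = 1 = Ge_B$ on $B$; since $\mu$ and $e_B$ are both supported on $\partial B$, their Green potentials are harmonic off $\partial B$, continuous up to $\partial B$, and vanish at infinity in $d \ge 3$, so by uniqueness of Green potentials $\mu = e_B$.

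The more delicate step is the conditional identification, given $y$, of $\bar\eta \oplus \xi$ as $P_y^B$. The natural way is to perform on $P_y^B$ the dual decomposition at the last visit to $\partial B'$: under $P_y^B$ the path starts at $y \in \partial B \subset B'^\circ$ and tends to infinity, so $\tau := \sup\{t>0:X_t \in B'\}$ is a.s.\ finite and positive with $X_\tau \in \partial B'$, and by the strong Markov property at $\tau$ the post-$\tau$ segment is $P_{X_\tau}^{B'}$, which matches $\xi$ on the $Q_{B'}$ side. To match the pre-$\tau$ segment with $\bar\eta$, one verifies equality of finite-dimensional distributions of the two bridge-type measures on paths from $y \in \partial B$ to $z \in \partial B'$ in $B' \setminus B$, using the symmetry $p_t(x,y) = p_t(y,x)$ of the Brownian transition density together with the last-exit identity (\ref{2.2}) applied to $B'$; it is this last-exit computation which produces on the $P_y^B$ side precisely the equilibrium weight $e_{B'}(dz)$ that appears by definition on the $Q_{B'}$ side.

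The main obstacle is this time-reversal identification, since $P_y^B$ for $y \in \partial B$ is itself only defined as the weak limit of $P_{y'}^B$ as $y' \to y$ from $\IR^d \setminus B$, and similarly the last-exit conditioning lands on $\partial B'$. The cleanest route is to first verify the matching for finite-dimensional distributions tested against functions supported strictly inside $B' \setminus \overline{B}$, where all conditional densities are unambiguous and the direct computation using $p_t$ and (\ref{2.2}) goes through, and then to extend to the boundary by the weak continuity of $P_z^B$ as $z \to \partial B$ from outside, as recalled in the discussion just above (\ref{2.1}).
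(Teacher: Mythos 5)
Your overall strategy is structurally close to the paper's: both proofs rest on the last-exit decomposition of Brownian motion at $L_{B'}$, and your Green-potential derivation of the balayage identity $\int e_{B'}(dz)\,P_z[H_B<\infty,\,X_{H_B}\in\cdot]=e_B$ is correct, clean, and self-contained (it is the identity the paper later records as (\ref{2.21}), citing Port--Stone). The main organizational difference is that you split (\ref{2.4}) into a marginal in $y=X_{H_B}$ and a conditional law of the backward path given $y$, whereas the paper treats the two together by integrating the last-exit formula (\ref{2.10}) against the regularizing measure $m_\varepsilon$ in (\ref{2.11}) and then sending $\varepsilon\to 0$.

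The gap sits exactly at the step you call ``the main obstacle'', and the plan you sketch does not close it. To identify the conditional backward law as $P_y^B$ you need the last-exit decomposition of $P_y^B$ at $B'$, including the joint law of $(L_{B'},X_{L_{B'}})$ under $P_y^B$ for $y\in\partial B$. This is not a consequence of (\ref{2.2}) (which is stated for the unconditioned $P_y$) plus ``weak continuity of $P_z^B$ as $z\to\partial B$'': the map $w\mapsto(L_{B'}(w),X_{L_{B'}}(w))$ is not a continuous functional on $C(\IR_+,\IR^d)$, so weak convergence $P_{y'}^B\Rightarrow P_y^B$ does not automatically transfer the decomposition to the boundary. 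Concretely, for $y'\notin B$ the disintegration density is proportional to $p_t(y',z)\,P^t_{y',z}[T_{B^c}>t]\big/P_{y'}[H_B=\infty]$, a $0/0$ expression as $y'\to y\in\partial B$; the conditional weight on $z$ given $y$ is $e_{B'}(dz)$ times the limit of this ratio, and that limit must then be matched against $e_{B'}(dz)\,P_z[X_{H_B}\in dy,\,H_B\in dt]\big/e_B(dy)$ coming from the $Q_{B'}$ side. Evaluating and matching this boundary limit is precisely what the two last-exit computations below (\ref{2.11}) in the paper's proof accomplish; your ``interior finite-dimensional distributions, then weak continuity'' step presupposes the resolution of that limit rather than providing it.
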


We postpone the proof of Lemma \ref{lem2.1} for the time being, and explain how the construction of the intensity measure of our basic Poisson point process proceeds. We first observe that for $K$ compact subset of $\IR^d$ we can unambiguously define
\begin{equation}\label{2.5}
Q_K = \theta_{H_K} \circ (1\{H_K < \infty\}\,Q_B), \;\mbox{for any closed ball $B \supseteq K$},
\end{equation}

\n
and this definition coincides with (\ref{2.3}) when $K$ is a closed ball. Indeed, when the closed balls $B_1$ and $B_2$ contain $K$, we can find a closed ball $B'$ containing $B_1 \cup B_2$ in its interior, so that by Lemma \ref{lem2.1} we have
\begin{equation}\label{2.6}
\begin{split}
\theta_{H_K} \circ \big(1\{H_K < \infty\} \,Q_{B_1}\big) & = \theta_{H_K} \big(1 \{H_K < \infty\} \, \theta_{H_{B_1}} \circ (1\{H_{B_1} < \infty\}\,Q_{B'})\big)
\\
& = \theta_{H_K} \circ \big(\theta_{H_{B_1}} (1\{H_K < \infty\}\,Q_{B'})\big) 
\\
&= \theta_{H_K} \circ \big(1\{H_K < \infty\}\,Q_{B'}\big), 
\end{split}
\end{equation}
and a similar identity holds with $B_2$ in place of $B_1$. Hence (\ref{2.5}) is unambiguous and agrees with (\ref{2.3}) when $K$ is a closed ball.

\medskip
We now come to the theorem constructing what will be in essence up to a non-negative multiplicative factor the intensity measure of the Poisson point process defining the Brownian interlacements.

\begin{theorem}\label{theo2.2}
There exists a unique $\sigma$-finite measure $\nu$ on $(W^*, \cW^*)$ such that for each compact subset $K$ of $\IR^d$
\begin{equation}\label{2.7}
1_{W^*_K} \,\nu = \pi^* \circ Q_K.
\end{equation}
\end{theorem}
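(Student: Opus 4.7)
The plan is to construct $\nu$ by patching together the finite ``local'' measures $\pi^*\circ Q_K$ via an exhaustion of $\IR^d$ by compacts, with consistency furnished by the extended form of Lemma \ref{lem2.1} recorded in (\ref{2.5})--(\ref{2.6}). First, for each compact $K$ I would introduce the finite measure $\nu_K=\pi^*\circ Q_K$ on $(W^*,\cW^*)$; it is supported in $W^*_K$ because $Q_K$ sits on $W^0_K\subseteq W_K$, and its total mass equals $\mathrm{cap}(K)<\infty$ by (\ref{2.3}) and (\ref{2.5}).

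The crux of the argument is the compatibility
$$\theta_{H_K}\circ\bigl(1\{H_K<\infty\}\,Q_{K'}\bigr)=Q_K\qquad\text{whenever $K\subseteq K'$ are compact},$$
which extends (\ref{2.6}) by sandwiching $K$ and $K'$ inside a common closed ball $B$ and applying Lemma \ref{lem2.1} twice, using that $K\subseteq K'$ forces $H_{K'}\le H_K$ and that $\theta_{H_K}\circ\theta_{H_{K'}}=\theta_{H_K}$ on $\{H_K<\infty\}$. From this I would deduce $1_{W^*_K}\nu_{K'}=\nu_K$ as follows. For $A\in\cW^*$ with $A\subseteq W^*_K$, every $w\in(\pi^*)^{-1}(A)$ must hit $K$, so $(\pi^*)^{-1}(A)\subseteq\{H_K<\infty\}$; since $(\pi^*)^{-1}(A)$ is moreover $\theta$-invariant, the above displayed identity, applied to the indicator of $(\pi^*)^{-1}(A)$, gives $Q_{K'}((\pi^*)^{-1}(A))=Q_K((\pi^*)^{-1}(A))$, i.e.\ $\nu_{K'}(A)=\nu_K(A)$.

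With this consistency at hand, I would take $B_n=B(0,n)$ and set
$$\nu(A):=\lim_{n\to\infty}\nu_{B_n}(A)=\sup_n\nu_{B_n}(A),\qquad A\in\cW^*,$$
the monotonicity in $n$ being exactly the compatibility above (since $\nu_{B_n}$ is carried by $W^*_{B_n}\subseteq W^*_{B_{n+1}}$). Because every bilateral trajectory hits some $B_n$, one has $W^*=\bigcup_n W^*_{B_n}$, so $\nu$ is $\sigma$-finite with $\nu(W^*_{B_n})=\mathrm{cap}(B_n)<\infty$; that $\nu$ is a measure follows from monotone convergence applied to the indicator sequences. Given an arbitrary compact $K$, one picks $n$ with $K\subseteq B_n$, and then for $A\subseteq W^*_K$ the chain $\nu(A)=\nu_{B_n}(A)=\nu_K(A)$ yields (\ref{2.7}). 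Uniqueness is automatic: any $\nu'$ satisfying (\ref{2.7}) coincides with $\pi^*\circ Q_{B_n}$ on each $W^*_{B_n}$, hence with $\nu$ on all of $W^*$.

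The main obstacle is the compatibility step, which hinges on careful bookkeeping of the interplay between $H_K$, $H_{K'}$, the quotient map $\pi^*$, and the shift-invariance of preimages in $W$; once it is settled, $\sigma$-additivity, $\sigma$-finiteness, the defining property for arbitrary compact $K$, and uniqueness all drop out of the exhaustion.
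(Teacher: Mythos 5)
Your argument is correct and follows essentially the same path as the paper: establish a consistency relation for $\pi^*\circ Q_K$ along an exhaustion by compacts using (\ref{2.5})--(\ref{2.6}), patch the restrictions together, and note that uniqueness is immediate since $W^*=\bigcup_n W^*_{K_n}$. The only difference is cosmetic: you prove the stronger $W$-level compatibility $\theta_{H_K}\circ(1\{H_K<\infty\}\,Q_{K'})=Q_K$ for $K\subseteq K'$ compact (via $\theta_{H_K}\circ\theta_{H_{K'}}=\theta_{H_K}$ on $\{H_K<\infty\}$) and then push down to $W^*$, whereas the paper observes directly that both $Q_K$ and $1_{W_K}Q_{K'}$ are shifts of the same measure $1_{W_K}Q_B$ and hence have the same image under $\pi^*$.
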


\begin{proof}
Given a sequence of compact subsets $K_n \uparrow \IR^d$, we have $W^* = \bigcup_{n \ge 0} W^*_{K_n}$, and the uniqueness of $\nu$ is immediate. For the existence of $\nu$, it suffices to check that for $K \subseteq K'$ compact subsets of $\IR^d$
\begin{equation}\label{2.8}
\pi^* \circ Q_K = \pi^* \circ (1_{W_K} \,Q_{K'}) .
\end{equation}

\n
Indeed, one then defines $\nu$ so that $1_{W^*_{K_n}} \nu = \pi^* \circ Q_{K_n}$ for some $K_n \uparrow \IR^d$, noting that the restriction to $W^*_{K_n}$ of $\pi^* \circ Q_{K_{n+1}}$ equals $\pi^* \circ(1_{W_{K_n}}Q_{K_{n+1}})$ which equals $\pi^* \circ Q_{K_n}$ by (\ref{2.8}). Using (\ref{2.8}) once again, one sees that $\nu$ does not depend on the sequence $K_n$, and (\ref{2.7}) immediately follows (choosing $K_0 = K$). To prove (\ref{2.8}), we note that when the closed ball $B$ contains $K'$, then by (\ref{2.5}) we have
\begin{equation*}
Q_K = \theta_{H_K} \circ (1_{W_K}  Q_B) \; \mbox{and} \; 1_{W_K}  Q_{K'} = 1_{W_K}  \theta_{H_{K'}} \circ (1_{W_{K'}}  Q_B) = \theta_{H_{K'}} \circ (1_{W_K}  Q_B).
\end{equation*}

\n
Hence, the images under $\pi^*$ of $Q_K$ and $1_{W'_K}  Q_{K'}$ coincide, that is (\ref{2.7}) holds. 
\end{proof}

We are now reduced to the 

\medskip\n
{\it Proof of Lemma \ref{lem2.1}}:

\medskip\n
It suffices to show that for any continuous compactly supported function $f$: $\IR \r \IR^d$,
\begin{equation}\label{2.9}
\begin{array}{l}
\dis\int e_{B'}(dy') \,E_{y'}\big[H_B < \infty, e^{i \int_0^{H_B} f(v-H_B) \cdot X_v dv} E_{X_{H_B}} \big[e^{i \int^\infty_0 f(v) \cdot X_v dv}\big]\,F'(y',H_B)\big] =
\\[1ex]
\dis\int e_B(dy) \,E^B_y \big[e^{i \int^\infty_0 f(-v) \cdot X_v dv}\big] \,E_y\big[e^{i \int^\infty_0 f(v) \cdot X_v dv}\big], \;\; \mbox{where}
\\[2ex]
F'(y',t) = E^{B'}_{y'} \big[e^{i \int^\infty_0 f(-v-t) \cdot X_v dv}\big], \;\;\mbox{for $y' \in \partial B'$, $t \ge 0$}.
\end{array}
\end{equation}

\n
Indeed the second line of (\ref{2.9}) coincides with $E^{Q_B}[e^{i \int_\IR f(v) \cdot X_v dv}]$, whereas by the strong Markov property at time $H_B$, the first line of (\ref{2.9}) equals
\begin{equation*}
E^{Q_{B'}}[H_B < \infty, e^{i \int_{\IR} f(v- H_B) \cdot X_v dv}] = E^{\theta_{H_B} \circ (1\{H_B<\infty\}Q_{B'})} [e^{i \int_\IR f(v) \cdot X_v dv}],
\end{equation*}

\n
and the claim (\ref{2.4}) follows. We will thus prove (\ref{2.9}). We denote by $P^t_{y,y'}$ the Brownian bridge measure in time $t > 0$, from $y$ to $y'$ in $\IR^d$, see \cite{Szni98a}, p.~137-140, and write $E^t_{y,y'}$ for the corresponding expectation. We consider $y \in B'$ and use the last exit decomposition at the last visit of $B'$, see Theorem 2.12 of \cite{Geto79} and (3.39) on p.~60 of \cite{Szni98a}, to assert that under $P_y$, conditionally on $L_{B'} = t$ (with $t > 0$), and $X_{L_{B'}} = y'$ (with $y' \in \partial B)$, the law of $(X_{L_{B'} + v})_{v\ge 0}$ and $(X_v)_{0 \le v \le t}$ are independent, respectively distributed as $P^B_{y'}$ and $P^t_{y,y'}$ and $(L_{B'}$, $X_{L_{B'}})$ has distribution $p_t(y,y') \,e_{B'}(dy')\, 1\{t > 0\}\,dt$. Writing $U = B^c$ we find as a result that for $y$ in $B'$
\begin{equation}\label{2.10}
\begin{array}{l}
E_y\big[e^{i \int^\infty_0 f(-v) \cdot X_v dv}, H_B = \infty\big] =
\\[2ex]
\dis\int^\infty_0 dt \dis\int e_{B'}(dy') \,E_{y'}^{B'} \big[e^{i \int^\infty_0 f(-v-t) \cdot X_v dv}\big] \,E^t_{y,y'} \big[e^{i \int^t_0 f(-v) \cdot X_v dv}, T_U > t\big] \,p_t(y,y') = 
\\[2ex]
\dis\int^\infty_0 dt \dis\int e_{B'}(dy') \,F'(y',t) \, E^t_{y',y} \big[e^{\int^t_0 f(v-t) \cdot X_v dv}, T_U > t\big] \,p_t(y',y),
\end{array}
\end{equation}

\n
where, in the last step, we used that the law of $(X_{t- \,\cdot})_{0 \le \cdot \le t}$ under $P^t_{y,y'}$ equals $P^t_{y',y}$, cf.~(A.12), p.~139 of \cite{Szni98a}, that $p_t(\cdot,\cdot)$ is symmetric, and the notation from below (\ref{2.9}).

\medskip
We introduce the notation $F(z) = \psi(z) \,E_z[e^{i \int^\infty_0 f(v) \cdot X_v dv}]$ for $z \in \IR^d$, where $\psi$ is a continuous compactly supported $[0,1]$-valued function, which equals to $1$ on a neighborhood of $B$. We pick $\varepsilon > 0$ and introduce the finite measure
\begin{equation}\label{2.11}
\begin{split}
m_\varepsilon(dy) & = \mbox{\f $\dis\frac{1}{\varepsilon}$} \; \dis\int dz \,F(z) \,P_z[H_B < \ve, X_\varepsilon \in dy] 
\\ 
 &= \mbox{\f $\dis\frac{1}{\varepsilon}$}  \; \dis\int dz \,F(z) \,P^\varepsilon_{z,y} [H_B < \varepsilon] \,p_\varepsilon (z,y) \,dy.
\end{split}
\end{equation}

\n
We integrate (\ref{2.10}) with respect to $m_\varepsilon(dy)$. The first line of (\ref{2.10}) yields
\begin{align*}
& \dis\int dz\,F(z) \,\mbox{\f $\dis\frac{1}{\varepsilon}$} \;E_z\big[H_B < \varepsilon, \;E_{X_\varepsilon} [H_B = \infty, e^{i \int^\infty_0 f(-v) \cdot X_v dv}]\big] \stackrel{{\rm definition} \;P_\point^B}{=}
\\ 
& \dis\int dz\,F(z) \,\mbox{\f $\dis\frac{1}{\varepsilon}$} \;E_z\big[H_B < \varepsilon, \;P_{X_\varepsilon} [H_B = \infty]\,E^B_{X_\varepsilon} [e^{i \int^\infty_0 f(-v) \cdot X_v dv}]\big] \underset{\rm property}{\stackrel{\rm simple \; Markov}{=}}
\\ 
& \dis\int dz\,F(z) \,\mbox{\f $\dis\frac{1}{\varepsilon}$} \;E_z\big[H_B < \varepsilon, H_B \circ \theta_\ve = \infty, \, E^B_{X_\ve} [e^{i \int^\infty_0 f(-v) \cdot X_v dv}]\big] = 
\\ 
&  \dis\int dz\,F(z) \,\mbox{\f $\dis\frac{1}{\varepsilon}$} \;E_z\big[0 < L_B < \ve, \,E^B_{X_\ve} [e^{i \int^\infty_0 f(-v) \cdot X_v dv}]\big] .
\end{align*}

\n
By the last exit decomposition of Brownian motion starting at $z$ at the last visit of $B$, the above expression equals
\begin{align*}
& \dis\int dz\,F(z) \,\mbox{\f $\dis\frac{1}{\varepsilon}$} \;\dis\int^\ve_0 ds \,p_s(z,y) \,e_B(dy) \,E^B_y \big[E^B_{X_{\ve -s}} [e^{i \int^\infty_0 f(-v) \cdot X_v dv}]\big] =
\\
&\dis\int e_B(dy) \, \mbox{\f $\dis\frac{1}{\varepsilon}$} \;\dis\int^\ve_0 ds \Big(\int p_s(y,z) \,F(z) \,dz\Big)\,E^B_y \big[E_{X_{\ve -s}}^B [e^{i \int^\infty_0 f(-v) \cdot X_v dv}]\big]  \;\mbox{(by symmetry of $p_s(\cdot,\cdot)$)}.
\end{align*}

\n
Using the fact that for $y\in \partial B$, $P^B_y$-a.s., as $u \r 0$, $P^B_{X_u}$ converges weakly to $P^B_y$ (one could actually also invoke the Markov property under $P^B_y$ at this point), and dominated convergence, as $\ve \r 0$, the above quantity tends to
\begin{equation}\label{2.12}
\dis\int e_B(dy) \,E_y\big[e^{i \int^\infty_0 f(v) \cdot X_v dv}\big] \,E^B_y \big[e^{i \int^\infty_0 f(-v) \cdot X_v dv}\big].
\end{equation}

\n
Using the symmetry of $p_\ve(\cdot,\cdot)$ and the fact that for $y \in U = B^c$, $P^\ve_{z,y}[H_B < \ve] = P^\ve_{y,z} [H_B < \ve]$ (when $z \in B$, note that $P_z$-a.s., $\wt{H}_B = 0$, and both terms equal $1$), the last line of (\ref{2.10}) after integration with respect to $m_\ve(dy)$ yields
\begin{align*}
& \dis\int e_{B'} (dy') \dis\int^\infty_0 \mbox{\f $\dis\frac{dt}{\varepsilon}$} \;F'(y',t)  \dis\int dz \,dy \,F(z) 
\\
&\qquad  E^t_{y',y} \big[e^{i \int^t_0 f(v-t) \cdot X_v dv}, T_U > t\big]\, p_t(y',y) \,P^\ve_{y,z}[H_B < \ve] \,p_\ve(y,z) =
\\[1ex]
& \dis\int e_{B'} (dy') \dis\int^\infty_0 \mbox{\f $\dis\frac{dt}{\varepsilon}$} \; F'(y',t) \,E_{y'}  \big[e^{i \int^t_0 f(v-t) \cdot X_v dv}, t < H_B < t + \ve, \,F(X_{t+\ve})\big] =
\\ 
& \dis\int e_{B'} (dy')\,E_{y'} \Big[H_B < \infty, \mbox{\f $\dis\frac{1}{\varepsilon}$}  \;\dis\int^{H_B}_{(H_B - \ve)_+} e^{i \int^t_0 f(v-t) \cdot X_v dv} F(X_{t + \ve}) \,F'(y',t)\,dt\Big].
\end{align*}

\n
Applying dominated convergence as $\ve \r 0$, the above quantity tends to 
\begin{equation}\label{2.13}
\dis\int e_{B'}(dy') \,E_{y'} \big[H_B < \infty, e^{i \int^{H_B}_0 f(v-H_B) \cdot X_v dv} E_{X_{H_B}} [e^{i \int^\infty_0 f(v) \cdot X_v dv}]\,F'(y', H_B)\big].
\end{equation}

\n
Comparing (\ref{2.12}) and (\ref{2.13}), we have shown (\ref{2.9}). This proves Lemma \ref{lem2.1}. \hfill $\square$

\begin{remark}\label{rem2.3} \rm  ~

\medskip\n
1) When $B$ is a closed ball, for $w \in W_B$ we can define $L_B(w) = \sup\{t \in \IR, L_B(w) \in B\}$. Applying the last exit decomposition formula, as in (\ref{2.10}), to the forward term in (\ref{2.3}), we see that for $A, A' \in W_+$ we have
\begin{equation}\label{2.14}
\begin{array}{l}
Q_B [(X_{-t})_{t \ge 0} \in A', \;(X_s)_{0 \le s \le L_B} \in \cdot, \,(X_{L_B+s})_{s \ge 0} \in A] =
\\[1ex]
\dis\int^\infty_0 dt \dis\int e_B (dy') \,e_B (dy) \,P^B_{y'}[A'] \,P^t_{y',y}[\cdot ] \,P^B_y[A]\,p_t(y',y)
\end{array}
\end{equation}

\n
(where $(X_s)_{0  \le s \le L_B}$ is viewed as a random element of the space $\cT$ of continuous $\IR^d$-valued trajectories of positive finite duration, measurably identified with $C([0,1], \IR^d) \times (0,\infty)$ via the map $(w,T) \r w(\frac{\cdot}{T}) \in \cT$).

\medskip
From this identity using the symmetry of $p_t(\cdot,\cdot)$ and the already used fact that $P^t_{y',y}$ is the image of $P^t_{y,y'}$ under time reversal, we find (analogously to (1.40) of \cite{Szni10a}) that
\begin{equation}\label{2.15}
\mbox{$(X_{L_B - t})_{t \in \IR}$ under $Q_B$ has law $Q_B$.}
\end{equation}

\n
One can introduce on $W^*$ the time inversion $w^* \r \overset{\vee}{w}\,{\!^*}$, where $\overset{\vee}{w}\,{\!^*} = \pi^*(\overset{\vee}{w})$, with $\overset{\vee}{w}(t) = w(-t)$ for $t \in \IR$, and $w \in W$ arbitrary such that $\pi^*(w) = w^*$. It follows from (\ref{2.15}) that the image $(\pi^* \circ Q_B)^\vee$ of $(\pi^* \circ Q_B)$ under time inversion coincides with $\pi^* \circ Q_B$. By Theorem \ref{theo2.2} it follows that the image $\overset{\vee}{\nu}$ of $\nu$ under time inversion satisfies
\begin{equation}\label{2.16}
\overset{\vee}{\nu} = \nu .
\end{equation}

\medskip\n
2) In an easier fashion one sees that when one considers $y \in \IR^d$ and denotes by $\tau_y$ the translation by $-y$ on $W^*$, which to $w^*$ associates $w^* - y$ (defined in an obvious way), or a linear isometry $R$ of $\IR^d$, and its natural operation $w^* \r R w^*$ on $W^*$, we find that
\begin{align}
&\mbox{$\nu$ is invariant under $\tau_y$}, \label{2.17}
\\[1ex]
& \mbox{$\nu$ is invariant under $R$}, \label{2.18}
\end{align}

\medskip\n
3) When $\lambda >0$, one can define the scaling $s_\lambda(w) = \lambda w(\frac{\cdot}{\lambda^2})$ for $w \in W$, and on $W^*$ via $s_\lambda(w^*) = \pi^* (s_\lambda(w))$, for $w \in W$ arbitrary such that $\pi^*(w) = w^*$. From the identity valid for arbitrary $\lambda >0$ and closed ball $B$
\begin{equation}\label{2.19}
s_\lambda \circ Q_B = \lambda^{2-d} Q_{\lambda B},
\end{equation}
one finds by Theorem \ref{theo2.2} that
\begin{equation}\label{2.20}
s_\lambda \circ  \nu = \lambda^{2-d} \nu, \; \mbox{for $\lambda > 0$}.
\end{equation} 

\medskip\n
4) One knows that when the compact set $K (\subseteq \IR^d)$ lies in the interior of the closed ball $B$, then $e_K(\cdot) = P_{e_B} [H_K < \infty, X_{H_K} \in \cdot]$, see Theorem 1.10, p.~58 of \cite{PortSton78}. By (\ref{2.3}) and (\ref{2.5}) we thus see that
\begin{equation}\label{2.21}
Q_K [(X_t)_{t \ge 0} \in \cdot] = P_{e_K}.
\end{equation} 

\n
We can now introduce the measurable map from $W^*_K$ into $W_+$ defined by $w^* \in W^*_K \r w^{*,K,+} = \big(w(H_K + t)\big)_{t \ge 0}$, for any $w \in W_K$ with $\pi^*(w) = w^*$. Then, by (\ref{2.7}) and (\ref{2.21}) we see that
\begin{equation}\label{2.22}
\mbox{the image of $1_{W^*_K} \nu$ under $w^* \r w^{*,K,+}$ equals $P_{e_K}$}.
\end{equation} 
\hfill $\square$
\end{remark}

As a next step we introduce the canonical space for the Brownian interlacement point process, namely the space of point measures on $W^* \times \IR_+$, 
\begin{align}
\Omega = \big\{ & \o = \dsl_{i \ge 0} \delta_{(w^*_i, \alpha_i)}, \;\mbox{with} \;(w^*_i, \alpha_i) \in W^* \times [0,\infty) \;\mbox{and} \;\o(W^*_K \times [0,\alpha]) < \infty,\label{2.23}
\\[-1ex]
& \mbox{for any compact subset $K$ of $\IR^d$ and $\alpha \ge 0\big\}$}. \nonumber
\end{align}  

\n
We endow $\Omega$ with the $\sigma$-algebra $\cA$ generated by the evaluation maps $w \r \o(B)$, for $B \in \cW^* \otimes \cB(\IR_+)$, and denote by $\IP$ the law on $(\Omega, \cA)$ of the Poisson point measure with intensity measure $\nu \otimes d \alpha$ on $W^* \times \IR_+$. For $K$ compact subset of $\IR^d$ and $\alpha \ge 0$, we consider the measurable function on $\Omega$ with value in the set of finite point measures on $W_+$
\begin{align}
& \mu_{K,\alpha}(\o) = \dsl_{i \ge 0} 1\{\alpha_i \le \alpha, w_i^* \in W^*_K\} \, \delta_{w_i^{*, K,+}}, \;\mbox{when} \;\; \o = \dsl_{i \ge 0} \delta_{(w_i^*,\alpha_i)} \in \Omega, \label{2.24}
\intertext{with the notation from above (\ref{2.22}), We readily see by (\ref{2.22}) that}
& \mbox{$\mu_{K,\alpha}$ is a Poisson point process on $W_+$ with intensity measure $\alpha P_{e_K}$}. \label{2.25}
\end{align}

\medskip\n
As a direct consequence of (\ref{2.16}) - (\ref{2.20}) we obtain the following invariance properties.
\begin{proposition}\label{prop2.4}
$\IP$ is invariant under the following transformations on $\Omega$:
\begin{align}
\o & = \dsl_{i \ge 0} \delta_{(w^*_i,\alpha_i)} \longrightarrow \dsl_{i \ge 0} \delta_{(\overset{\vee}{w}_i\,{\!\!\!^*},\alpha_i)}, \label{2.26}
\\[2ex]
\o & = \dsl_{i \ge 0} \delta_{(w^*_i,\alpha_i)} \longrightarrow \dsl_{i \ge 0} \delta_{(w^*_i - y,\alpha_i)} \label{2.27} \quad \qquad  \mbox{(with $y \in \IR^d$)},
\\[2ex]
\o & = \dsl_{i \ge 0} \delta_{(w^*_i,\alpha_i)} \longrightarrow \dsl_{i \ge 0} \delta_{(Rw_i^*,\alpha_i)} \qquad \quad \; \mbox{(with $R$ linear isometry of $\IR^d$)}, \label{2.28}
\\[2ex]
\o & = \dsl_{i \ge 0} \delta_{(w^*_i,\alpha_i)} \longrightarrow \dsl_{i \ge 0} \delta_{(s_\lambda(w_i^*),\lambda^{2-d} \alpha_i)} \quad \mbox{(with $\lambda > 0$)}. \label{2.29}
\end{align}
\end{proposition}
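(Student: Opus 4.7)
The plan is to reduce the claim, in each of the four cases, to an invariance of the intensity measure $\nu \otimes d\alpha$ on $W^* \times \IR_+$. Indeed, since $\IP$ is by definition the law of a Poisson point measure on $W^* \times \IR_+$ with intensity $\nu \otimes d\alpha$, and each of the four transformations on $\Omega$ in (\ref{2.26})--(\ref{2.29}) is the natural pushforward induced by a measurable bijection $T$ of $W^* \times \IR_+$ onto itself, the distribution of the transformed Poisson measure is again Poisson, with intensity $T \circ (\nu \otimes d\alpha)$. Thus it suffices to verify $T \circ (\nu \otimes d\alpha) = \nu \otimes d\alpha$ in each case.

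For (\ref{2.26}), the associated map is $T(w^*,\alpha) = (\overset{\vee}{w}\,{\!^*},\alpha)$, and the required identity $T \circ (\nu \otimes d\alpha) = \overset{\vee}{\nu} \otimes d\alpha = \nu \otimes d\alpha$ is an immediate consequence of (\ref{2.16}). For (\ref{2.27}), $T(w^*,\alpha) = (w^* - y,\alpha) = (\tau_y(w^*),\alpha)$, and (\ref{2.17}) gives $T \circ (\nu \otimes d\alpha) = (\tau_y \circ \nu) \otimes d\alpha = \nu \otimes d\alpha$. Similarly, (\ref{2.18}) settles (\ref{2.28}).

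The only case requiring a brief calculation is the scaling (\ref{2.29}). Here $T(w^*,\alpha) = (s_\lambda(w^*),\lambda^{2-d}\alpha)$, so for $A \in \cW^*$ and a Borel set $B \subseteq \IR_+$, using Fubini together with the change of variables $\alpha = \lambda^{d-2}\beta$ on $\IR_+$,
\begin{equation*}
T \circ (\nu \otimes d\alpha)(A \times B) = (s_\lambda \circ \nu)(A) \cdot \int_{\IR_+} \! 1\{\lambda^{2-d}\alpha \in B\}\, d\alpha = \lambda^{2-d}\nu(A) \cdot \lambda^{d-2} |B| = \nu(A)\, |B|,
\end{equation*}
where in the second equality we invoked (\ref{2.20}). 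Since product sets generate $\cW^* \otimes \cB(\IR_+)$, this yields the desired invariance and completes the proof.

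There is essentially no hard step here; the construction has been arranged so that all four invariance properties of $\nu$ needed have already been recorded in Remark \ref{rem2.3}. The only point requiring any care is bookkeeping the cancellation $\lambda^{2-d} \cdot \lambda^{d-2} = 1$ between the scaling of $\nu$ and the scaling of Lebesgue measure on the level axis, which explains why the level change $\alpha_i \mapsto \lambda^{2-d}\alpha_i$ in (\ref{2.29}) is precisely the one that restores invariance.
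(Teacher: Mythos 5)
Your proof is correct and takes exactly the route the paper intends: the paper gives no further argument than the remark that Proposition~\ref{prop2.4} is a direct consequence of (\ref{2.16})--(\ref{2.20}), and you have simply made this explicit by reducing each case to the invariance of the intensity measure $\nu \otimes d\alpha$ under the induced bijection of $W^* \times \IR_+$, with the only nontrivial bookkeeping (the cancellation $\lambda^{2-d}\cdot\lambda^{d-2}=1$ in the scaling case) handled correctly.
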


\n
When $\o \in \Omega$, $\alpha \ge 0$, $r \ge 0$, the formula (see the beginning of Section 1 for notation)
\begin{equation}\label{2.30}
\begin{array}{l}
\mbox{$\cI^\alpha_r(\o) = \bigcup\limits_{i \ge 0: \alpha_i \le \alpha} \;\;  \bigcup\limits_{s \in \IR} B(w_i(s),r)$, where $\o \in \dsl_{i \ge 0} \delta_{(w_i^*,\alpha_i)}$}
\\[2ex]
\mbox{and $\pi^*(w_i) = w_i^*$ for $i \ge 0$},
\end{array}
\end{equation}

\n
defines the {\it Brownian interlacement at level $\alpha$ with radius $r$}. By construction, see (\ref{2.23}), this is a closed subset of $\IR^d$. When $r = 0$, we refer to $\cI_0^\alpha(\o)$ as the {\it Brownian fabric at level $\alpha$}. The terminology is truly pertinent when $d = 3$, see (\ref{2.36}) below. The complement of $\cI^\alpha_r(\o)$ is an open subset of $\IR^d$, the {\it vacant set at level $\alpha$ and radius $r$}:
\begin{equation}\label{2.31}
\cV^\alpha_r(\o) = \IR^d \backslash \cI_r^\alpha(\o), \;\mbox{for} \; \o \in \Omega, \,\alpha \ge 0, \,r \ge 0.
\end{equation}

\n
The set $\Sigma$ of closed (possibly empty) subsets of $\IR^d$ can be endowed with the $\sigma$-algebra $\cF$ generated by the sets $\{F \in \Sigma; \,F \cap K = \emptyset\}$, where $K$ varies over the compact subsets of $\IR^d$, see \cite{Math75}, p.~27 (incidentally, this is the Borel $\sigma$-algebra for a metrizeable topology for which $\Sigma$ is compact, see \cite{Math75}, p.~3 and 27). The $\Sigma$-valued map $\cI^\alpha_r$ on $\Omega$ is measurable, and one can thus consider its law $Q^\alpha_r$ on $(\Sigma, \cF)$.

\begin{proposition}\label{prop2.5} ($\alpha \ge 0$, $r \ge 0$, $y \in \IR^d$, $R$ linear isometry, $\lambda > 0$)

\medskip
$Q^\alpha_r$ is determined by the identity
\begin{equation}\label{2.32}
Q^\alpha_r(\{F \in \Sigma; \, F \cap K = \emptyset\}) = \IP[\cI^\alpha_r \cap K = \emptyset] = e^{-\alpha\, {\rm cap}(K_r)}, \; \mbox{for $K \subseteq \IR^d$ compact},
\end{equation}

\n
where $K_r = K + B(0,r)$ the set of points within $|\cdot |$-distance at most $r$ from $K$. Moreover, under $\IP$,
\begin{align}
&\mbox{$\cI^\alpha_r + y$ has same law as $\cI^\alpha_r$} \qquad  \,\mbox{(translation invariance)}, \label{2.33}
\\[1ex]
&\mbox{$R(\cI^\alpha_r)$ has same law as $\cI^\alpha_r$} \qquad  \;\mbox{(isotropy)}, \label{2.34}
\\[1ex]
&\mbox{$\lambda \cI^\alpha_r$ has same law as $\cI^{\lambda^{2-d}\alpha}_{\lambda r}$}  \qquad \mbox{(scaling)}, \label{2.35}
\\[1ex]
&\mbox{$\cI^\alpha_0$ is a.s.-connected, when $d=3$, and disconnected, when $d \ge 4$ and $\alpha > 0$.} \label{2.36}
\end{align}
\end{proposition}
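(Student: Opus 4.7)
\textbf{Identity (\ref{2.32}) and law characterization.} The plan is to unfold the event: $\cI^\alpha_r \cap K = \emptyset$ iff no trajectory $w_i^*$ with $\alpha_i \le \alpha$ satisfies $\bigcup_s B(w_i(s),r) \cap K \ne \emptyset$, iff no such $w_i^*$ lies in $W^*_{K_r}$, iff the Poisson point measure $\mu_{K_r,\alpha}$ from (\ref{2.24}) vanishes. By (\ref{2.25}), $\mu_{K_r,\alpha}$ is Poisson with intensity $\alpha P_{e_{K_r}}$ on $W_+$, whose total mass is $\alpha\,{\rm cap}(K_r)$ since $e_{K_r}(\IR^d) = {\rm cap}(K_r)$. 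Hence $\IP[\cI^\alpha_r \cap K = \emptyset] = e^{-\alpha\,{\rm cap}(K_r)}$. That the avoidance functional $K \mapsto Q^\alpha_r(\{F \in \Sigma: F \cap K = \emptyset\})$ on compact $K$ determines the law on $(\Sigma, \cF)$ is the content of the Choquet--Matheron theorem, see \cite{Math75}, giving uniqueness and thereby determining $Q^\alpha_r$.

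\textbf{Invariance (\ref{2.33})--(\ref{2.35}).} These are transferred from Proposition \ref{prop2.4}. Since $\bigcup_s B((w - y)(s), r) = (\bigcup_s B(w(s),r)) - y$, applying (\ref{2.27}) yields (\ref{2.33}); the argument via (\ref{2.28}) for linear isometries gives (\ref{2.34}) identically. For the scaling, the identity $s_\lambda(w)(\lambda^2 s) = \lambda w(s)$ reparametrizes $\bigcup_{s'} B(s_\lambda(w)(s'), \lambda r) = \lambda \bigcup_s B(w(s), r)$, while (\ref{2.29}) converts the cutoff $\alpha_i \le \alpha$ into $\lambda^{2-d}\alpha_i \le \lambda^{2-d}\alpha$, so $\lambda \cI^\alpha_r \stackrel{\rm law}{=} \cI^{\lambda^{2-d}\alpha}_{\lambda r}$.

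\textbf{Connectedness (\ref{2.36}).} For $d \ge 4$, $\alpha > 0$: two independent infinite Brownian paths in $\IR^d$ have intersection of expected Hausdorff dimension $4 - d \le 0$, hence empty a.s.\ (polarity of Brownian curves). Conditioning on the Poisson cloud via (\ref{2.22}) (which decomposes each trajectory into independent forward and backward pieces conditioned on their entrance point and measure), a countable-union argument over pairs shows that distinct trajectories have pairwise disjoint traces $\IP$-a.s. Since $\nu(W^*_K) = {\rm cap}(K) > 0$ for any compact $K$ of positive capacity, the Poisson cloud contains a.s.\ infinitely many trajectories, yielding a.s.\ infinitely many connected components of $\cI^\alpha_0$. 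For $d = 3$, the plan is a chaining argument: a bounded portion of a trajectory trace has Hausdorff dimension $2 > d - 2 = 1$ and thus a.s.\ positive Newtonian capacity, so by (\ref{2.7}) and the total-mass computation $Q_K(W) = {\rm cap}(K)$ from (\ref{2.21}), the $\nu$-mass of trajectories intersecting any bounded piece of a given trace is a.s.\ positive. Consequently, each trajectory in $\cI^\alpha_0$ a.s.\ meets infinitely many other interlacement trajectories, and a chaining/iteration argument in the spirit of the connectedness proofs for discrete random interlacements shows that any two trajectories are linked by a finite chain of pairwise-intersecting ones, yielding a.s.\ connectedness. The main obstacle is this $d=3$ chaining: it requires quantifying the $\nu$-mass of trajectories hitting a \emph{random} trace (bootstrapping from the positive-probability meeting statement for two independent BMs in $\IR^3$, which is strictly less than $1$), and then upgrading positive-probability statements to almost-sure ones using the Poisson structure and the ergodicity of $\IP$ under the translations (\ref{2.27}).
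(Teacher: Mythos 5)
For (\ref{2.32})--(\ref{2.35}) your argument is correct and essentially coincides with the paper's: you unfold the avoidance event through $\mu_{K_r,\alpha}$ and (\ref{2.25}), while the paper computes $\IP[\cI^\alpha_r\cap K=\emptyset]=e^{-\alpha\nu(W^*_{K_r})}$ directly via (\ref{2.7}) and (\ref{2.21}); these are the same computation in slightly different notation, and both invoke the fact that avoidance probabilities on compacts determine the law on $(\Sigma,\cF)$. The transfer of (\ref{2.33})--(\ref{2.35}) from Proposition \ref{prop2.4} is as in the paper.

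The treatment of (\ref{2.36}) has real problems. For $d\ge 4$, the appeal to "expected Hausdorff dimension $4-d\le 0$, hence empty" is a heuristic and not a proof: a set of Hausdorff dimension $0$ need not be empty. What is actually needed (and what the paper uses, citing \cite{DvorErdoKaku50}) is the genuine theorem that two independent Brownian traces in $\IR^d$ from distinct starting points are a.s. disjoint when $d\ge 4$, together with (\ref{2.15}) to handle the bilateral parts of the trajectories.

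More seriously, your entire $d=3$ plan rests on a false premise. You assert that the probability that two independent Brownian motions in $\IR^3$ meet "is strictly less than $1$." In fact two independent Brownian traces in $\IR^3$, run for all time, intersect \emph{almost surely}: the non-intersection probability up to reaching distance $R$ from the starting points decays like $R^{-\xi}$ with $\xi>0$ (see \cite{Lawl91}), hence vanishes in the limit, and the a.s. intersection is the fact cited from \cite{DvorErdoKaku50}. Once this is in place the paper's one-line argument goes through: under $Q_B\otimes Q_B$ one has $w(\IR)\cap w'(\IR)\ne\emptyset$ a.s., so $\IP$-a.s. every pair of trajectories with label $\le\alpha$ in the cloud has intersecting traces, and since each trace is connected, $\cI^\alpha_0$ is connected. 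The entire chaining/bootstrapping/ergodicity machinery you describe as the "main obstacle" is unnecessary and stems from misidentifying the key probabilistic fact.
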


\begin{proof}
Note that $\{\cI^\alpha_r \cap K = \emptyset\} = \{\o \in \Omega; \,\o(W^*_{K_r} \times [0,\alpha]) = 0\}$, so that
\begin{equation*}
\IP[\cI^\alpha_r \cap K = \emptyset ]  = c^{-\alpha \nu(W^*_{K_r})} \stackrel{(\ref{2.7}), (\ref{2.21})}{=} e^{-\alpha \,{\rm cap}(K_r)}, \; \mbox{whence (\ref{2.32})}.
\end{equation*}

\n
The identity (\ref{2.32}) determines $Q^\alpha_r$ on a $\pi$-system generating $\cF$, and the first claim follows. The identities (\ref{2.33}) - (\ref{2.35}) are direct consequences of (\ref{2.27}) - (\ref{2.29}) (alternatively, they can be derived from (\ref{2.32})). The last claim is a direct consequence of the fact that for any closed ball $B, Q_B(dw) \otimes Q_B(dw')$-a.s., $w(\IR) \cap w'(\IR) \not= \emptyset$, when $d=3$, but $w(\IR) \cap w'(\IR) = \emptyset$, when $d \ge 4$, as a consequence of the fact that Brownian paths meet each other when $d=3$, and, when starting from different points, miss each other when $d \ge 4$, see \cite{DvorErdoKaku50} (when $d \ge 4$ we also use (\ref{2.15}) to take care of the bilateral nature of the paths).
\end{proof}

As a last topic of this section, we introduce the occupation-time measure of Brownian interlacements at level $\alpha \ge 0$ and discuss some of its properties. It is the locally finite (or Radon) measure on $\IR^d$ defined for $\o \in \Omega$, $\alpha \ge 0$, $A \in \cB(\IR^d)$ by
\begin{equation}\label{2.37}
\begin{split}
\cL_\alpha(\o) (A) & = \dsl_{i \ge 0: \alpha_i \le \alpha} \; \dis\int_{\IR} 1\{w_i(s) \in A\} \,ds, \;\;\mbox{where $\o = \dsl_{i \ge 0} \delta_{(w^*_i,\alpha_i)}$ with}
\\[-2ex]
& \qquad \qquad \qquad \quad  \mbox{$\pi^*(w_i) = w_i^*$, for $i \ge 0$},
\\
& = \big\langle \o, f_A \otimes 1_{[0,\alpha]} \big\rangle,
\end{split}
\end{equation}

\n
where $f_A(w^*) = \int_\IR 1_A\big(w(s)\big)ds$, for $w \in W$ arbitrary such that $\pi^*(w) = w^*$, and $\langle \omega,h\rangle$ stands for the integral of $h$ with respect to $\omega$. Note that by the choice of $\Omega$ and $W^*$, the expression in (\ref{2.37}) is finite when $A$ is a bounded set. From the second line of (\ref{2.37}), the dependence in $\o$ is $\cA$-measurable, and $\cL_\alpha$ defines a random measure on $\IR^d$, see Chapter 1 of \cite{Kall76}. Further when $A \in B(\IR^d)$ is bounded and $B$ a closed ball containing $A$, then for $\alpha \ge 0$
\begin{equation}\label{2.38}
\begin{split}
\IE[\cL_\alpha(\o)(A)] & = \IE\big[\big\langle \o, f_A \otimes 1_{[0,\alpha]}\big\rangle\big] = \alpha \big\langle \nu,f_A\big\rangle
\\
&\!\!\!\!\!\! \stackrel{(\ref{2.7})_,(\ref{2.3})}{=} \alpha \,E_{e_B} \Big[\dis\int^\infty_0 1_A(X_s) \,ds\Big] = \alpha \dis\int e_B(dy) \,G(y,y') \,1_A(y')\,dy'
\\
&=  \alpha \,|A| \;\; \mbox{(with $|A|$ the Lebesgue measure of $A$)},
\end{split}
\end{equation}

\medskip\n
since $\int e_B(dy)\,G(y,y') = 1$ for $y' \in B$, by (\ref{2.2}), (\ref{2.1}). In other words:
\begin{equation}\label{2.39}
\mbox{the intensity measure of $\cL_\alpha$ equals $\alpha \,dy$}.
\end{equation}
Observe also that by inspection of (\ref{2.37})
\begin{equation}\label{2.40}
\mbox{the support of $\cL_\alpha$ coincides with $\cI^\alpha_0$}.
\end{equation}

\n
The next result will be very useful and provides an expression for the Laplace transform of $\cL_\alpha$.
\begin{proposition}\label{prop2.6}
When $V$ is a bounded, measurable, compactly supported function on $\IR^d$, and\begin{equation}\label{2.41}
\mbox{$\|G\,|V|\,\|_{L^\infty(\IR^d)} < 1$ (see (\ref{1.10}) for notation)},
\end{equation}

\n
then $I - GV$ is a bounded invertible operator on $L^\infty(\IR^d)$ and for any $\alpha \ge 0$,
\begin{equation}\label{2.42}
\IE\big[\exp\big\{\big\langle \cL_\alpha, V\big\rangle\big\}\big] = \exp\big\{\alpha \big\langle V, (I-GV)^{-1} 1\big\rangle\big\},
\end{equation}

\medskip\n
with the notation (\ref{1.9}) and $\langle \cL_\alpha, V\rangle$ the integral of $V$ with respect to $\cL_\alpha$.
\end{proposition}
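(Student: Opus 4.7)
\medskip\noindent
\textbf{Proof plan.} I would first verify that $GV$ acts on $L^\infty(\IR^d)$: for $f \in L^\infty(\IR^d)$ and $y \in \IR^d$,
$$|(GVf)(y)| \le \|f\|_{L^\infty}\dis\int G(y,y')\,|V(y')|\,dy' = \|f\|_{L^\infty}\,(G|V|)(y),$$
so by (\ref{2.41}) we have $\|GV\|_{L^\infty\to L^\infty} \le \|G|V|\,\|_{L^\infty} < 1$. The Neumann series $\sum_{n\ge 0}(GV)^n$ therefore converges in operator norm to $(I-GV)^{-1}$, and in particular $(I-GV)^{-1}1 \in L^\infty(\IR^d)$.

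For the identity (\ref{2.42}), I would pick a closed ball $B$ containing $\mbox{supp}(V)$ in its interior, and introduce the $\theta$-invariant functional $\varphi_V(w^*) = \dis\int_\IR V(w(s))\,ds$ (for any $w \in W$ representing $w^*$). Since $V$ is bounded with compact support and $w(s) \to \infty$ as $|s| \to \infty$, $\varphi_V$ is well-defined, and it vanishes outside $W^*_B$. With the notation of (\ref{2.37}),
$$\big\langle \cL_\alpha, V\big\rangle = \dsl_{i \ge 0} 1\{\alpha_i \le \alpha\}\,\varphi_V(w^*_i) = \big\langle \o, \varphi_V \otimes 1_{[0,\alpha]}\big\rangle.$$
Applying the exponential formula for the Poisson measure $\o$ with intensity $\nu \otimes d\alpha$ (whose admissibility I address below) then gives
$$\IE\big[\exp\langle \cL_\alpha, V\rangle\big] = \exp\Big\{\alpha \dis\int_{W^*_B} \big(e^{\varphi_V(w^*)} - 1\big)\,\nu(dw^*)\Big\}.$$

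Next I would use (\ref{2.7}) to rewrite the $\nu$-integral over $W^*_B$ as a $Q_B$-integral, and disintegrate $Q_B$ via (\ref{2.3}). Under $Q_B$, conditionally on $X_0 = y$ with $y \sim e_B$, the forward process $(X_s)_{s \ge 0}$ and the backward process $(X_{-s})_{s\ge 0}$ are independent with respective laws $P_y$ and $P^B_y$. Since $P^B_y$-a.s.\ the trajectory avoids $B \supseteq \mbox{supp}(V)$ at all positive times, the backward contribution to $\int_\IR V(X_s)\,ds$ vanishes, and setting $h(y) = E_y\big[\exp\int_0^\infty V(X_s)\,ds\big]$ one obtains
$$\dis\int Q_B(dw)\,\big(e^{\varphi_V(w)} - 1\big) = \dis\int e_B(dy)\,\big(h(y) - 1\big).$$
The Markov property at time $t$ in the expansion $e^{\int_0^\infty V(X_s)ds} - 1 = \int_0^\infty V(X_t)\,e^{\int_t^\infty V(X_s)ds}\,dt$ then yields the Feynman--Kac equation $h = 1 + GVh$, so $h = (I-GV)^{-1}1$ by uniqueness of bounded solutions (from paragraph one). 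Substituting $h - 1 = GVh$, invoking Fubini, and using $\int e_B(dy)\,G(y,y') = 1$ for $y' \in B$ (recalled below (\ref{2.38})) together with $\mbox{supp}(V) \subseteq B$, I get
$$\dis\int e_B(dy)\big(h(y) - 1\big) = \dis\int V(y')\,h(y')\,\Big(\dis\int e_B(dy)\,G(y,y')\Big)\,dy' = \big\langle V, (I-GV)^{-1}1\big\rangle,$$
which yields (\ref{2.42}).

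The main subtlety I foresee is justifying the Poisson exponential formula when $V$ may change sign, which requires $\int_{W^*_B}|e^{\varphi_V} - 1|\,d\nu < \infty$. This should follow from the bound $|\varphi_V| \le \|V\|_{L^\infty}\,T_B(w)$, where $T_B(w)$ denotes the total time spent in $B$, combined with the $Q_B$-integrability of $e^{\|V\|_{L^\infty}\,T_B}$; the latter is controlled by applying the Neumann-series argument of the first paragraph to $|V|$ in place of $V$, since (\ref{2.41}) is preserved under this substitution.
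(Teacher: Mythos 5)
Your proof is correct in its overall structure and matches the approach the paper implicitly points to (Theorem 2.1 of \cite{Szni11c} for the discrete analogue): apply the Poisson exponential formula, reduce to a single--trajectory expectation via (\ref{2.7}) and (\ref{2.3}), observe that the backward half contributes nothing because $V$ is supported in the interior of $B$ while $P^B_y$ never re-enters $B$, and close the computation with the Feynman--Kac identity $h = 1 + GVh$ together with $\int e_B(dy)\,G(y,y') = 1$ for $y' \in B$.

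There is one small slip in the final paragraph. The chain ``$|\varphi_V| \le \|V\|_{L^\infty}\,T_B(w)$, and $e^{\|V\|_{L^\infty} T_B}$ is $Q_B$-integrable by the Neumann-series argument for $|V|$'' does not go through: applying Khas'minskii/Neumann to $|V|$ bounds $E_y\big[e^{\int_0^\infty |V|(X_s)\,ds}\big]$, which is the \emph{smaller} exponential, not $E_y[e^{\|V\|_\infty T_B}]$; and $\|\, \|V\|_{L^\infty}\, G\,1_B\,\|_{L^\infty}$ need not be $<1$ under (\ref{2.41}). The fix is immediate and actually cleaner: use directly $|\varphi_V| \le \varphi_{|V|} = \int_\IR |V|(w(s))\,ds$, so that $|e^{\varphi_V}-1| \le e^{\varphi_{|V|}} - 1$, whose $Q_B$-integral is finite precisely by the Neumann-series bound for $|V|$ under (\ref{2.41}). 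With this adjustment the integrability needed to apply the Poisson exponential formula is justified, and the rest of your argument stands as written.
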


\begin{proof}
Up to routine modifications, this follows by similar arguments as Theorem 2.1 of \cite{Szni11c}.
\end{proof}

\begin{remark}\label{rem2.7} \rm 
As a direct application of the invariance property (\ref{2.29}) and the definition (\ref{2.9}) we see that for $\alpha \ge 0$, the occupation-time measure $\cL_\alpha$ has the following scaling invariance
\begin{equation}\label{2.43}
\cL_\alpha \stackrel{\rm law}{=} \lambda^2 \,h_\lambda \circ \cL_{\lambda^{d-2}\alpha}, \;\; \mbox{for $\lambda > 0$},
\end{equation}

\n
where $h_{\lambda}(y) = \lambda y$ denotes the homothety of ratio $\lambda$ on $\IR^d$. One can also recover the identity (\ref{2.43}) from (\ref{2.42}). \hfill $\square$
\end{remark}

\section{Scaling limits of occupation times}
\setcounter{equation}{0}

In this section we study the scaling limit of the random field of occupation times of random interlacements on $\IZ^d$, $d \ge 3$. In the constant intensity regime (\ref{0.2}), we show that $\cL^N$ (see (\ref{0.1})) converges in distribution to the occupation-time measure of Brownian interlacements, see Theorem \ref{theo3.2}. In the high intensity regime (\ref{0.4}), we show that $\wh{\cL}^N$ (see (\ref{0.5})) converges in distribution to the massless Gaussian free field, see Theorem \ref{theo3.3} and Corollary \ref{cor3.5}.

\medskip
We tacitly endow the set of Radon measures on $\IR^d$ with the topology of the vague convergence (see A.7 of \cite{Kall76}). Given a positive sequence $(u_N)_{N \ge 1}$, it follows from (\ref{1.12}) that 
\begin{equation}\label{3.1}
\ov{\IE} [\cL^N ([0,1)^d)] = \mbox{\f $\dis\frac{1}{d}$} \;N^{d-2} u_N .
\end{equation}

\n
This quantity equals $\alpha (> 0)$ in the constant intensity regime (\ref{0.2}), and tends to $\infty$ in the high intensity regime (\ref{0.4}). The low intensity regime (when $N^{d-2} u_N \r 0$) leads to a convergence in distribution of $\cL^N$ to the null measure on $\IR^d$, and we will only focus on the constant and high intensity regimes.

\medskip
We begin with some preparation. We consider an integer $M \ge 1$, and
\begin{equation}\label{3.2}
\mbox{a continuous function $V$ on $\IR^d$ with support in $C_M = [-M,M]^d$}.
\end{equation}

\medskip\n
We recall the notation (\ref{1.8}), (\ref{1.10}) from Section 1. With a slight abuse of notation, we still denote by $V$ the restriction of $V$ to $\IL_N$ (see (\ref{1.4})).

\begin{proposition}\label{prop3.1} (under (\ref{3.2}))
\begin{align}
& \sup\limits_{N \ge 1} \, \|G_N \,|V|\,\|_{L^\infty(\IL_N)}  \le c_0(M) \,\|V\|_{L^\infty(C_M \cap \IL_N)} \le c_0(M) \,\|V\|_{L^\infty(\IR^d)}. \label{3.3}
\\[1ex]
&\lim\limits_N \big\langle V, (G_N V)^{n-1} 1\big\rangle_{\IL_N} = \big\langle V,(GV)^{n-1} 1\big\rangle, \;\; \mbox{for all $n \ge 1$}. \label{3.4}
\end{align}
\end{proposition}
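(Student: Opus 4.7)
For (3.3), observe first that the classical bound $g(x) \le c/(1+|x|)^{d-2}$ on $\IZ^d$ follows from $g \le g(0)$ together with the asymptotics (1.2)-(1.3). Since $V$ is supported in $C_M$,
$$G_N |V|(y) \le \|V\|_{L^\infty(C_M \cap \IL_N)} \cdot \frac{1}{dN^2} \sum_{x' \in \IZ^d \cap NC_M} g(Ny,x');$$
for $y \in \IL_N$ the substitution $z = x' - Ny \in \IZ^d$, together with the inclusion $NC_M - Ny \subseteq B(0,2MN\sqrt{d})$, reduces the inner sum to $c \sum_{|z| \le 2MN\sqrt{d}}(1+|z|)^{-(d-2)} \le c M^2 N^2$, using the elementary estimate $\sum_{|z|\le R}(1+|z|)^{-(d-2)} \le c R^2$ valid for $d\ge 3$. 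The resulting bound $c M^2 \|V\|_{L^\infty(C_M\cap\IL_N)}$ is uniform in $N \ge 1$ and $y \in \IL_N$, yielding (3.3) with $c_0(M) = cM^2$ (the second inequality being trivial).

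For (3.4) I would argue by induction on $n$, the base case $n=1$ being the standard Riemann-sum convergence $\frac{1}{N^d}\sum_{y \in \IL_N \cap C_M} V(y) \to \int_{C_M} V(y)\,dy$ (valid because $V$ is continuous with compact support). For $n \ge 2$, expand
$$\big\langle V,(G_N V)^{n-1} 1\big\rangle_{\IL_N} = \frac{1}{N^{nd}} \sum_{(y_1,\ldots,y_n) \in (\IL_N \cap C_M)^n} V(y_1) \prod_{k=1}^{n-1} g_N(y_k,y_{k+1}) V(y_{k+1}),$$
and write $\langle V,(GV)^{n-1} 1\rangle$ as the corresponding $n$-fold integral over $C_M^n$. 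Fix $\gamma > 0$ and split the multi-index set into the off-diagonal region $A_\gamma = \{|y_k - y_{k+1}| > \gamma \text{ for all } 1 \le k \le n-1\}$ and its complement. On $A_\gamma$, the uniform convergence $g_N \to G$ from (1.7), combined with the uniform continuity of $V$ on $C_M$, identifies the discrete sum as a step-function Riemann approximation of the continuous integral, giving convergence as $N \to \infty$ for each fixed $\gamma > 0$. On the complement, the same change of variables as in (3.3) delivers the local bound $\frac{1}{N^d}\sum_{y' \in \IL_N,\, |y-y'|\le \gamma} g_N(y,y') \le c\gamma^2$; iterating (3.3) to estimate the remaining sums bounds the near-diagonal contribution by $C(M,V,n)\gamma^2$ uniformly in $N$, and the continuous analogue $\int_{|y-y'|\le \gamma} G(y,y')\,dy' \le c\gamma^2$ handles the corresponding integral. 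Sending $N \to \infty$ first and then $\gamma \to 0$ closes the induction.

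The main obstacle is the diagonal singularity of $G$---and the corresponding $O(N^{d-2})$ peak of $g_N$ on scale $1/N$---along the sets $\{y_k = y_{k+1}\}$ in the $n$-fold expansion, where the uniform convergence (1.7) degenerates and the sum cannot be treated as a plain Riemann approximation. The quantitative $O(\gamma^2)$ small-ball estimate that powers the proof of (3.3) is precisely what is needed to tame those diagonals; once it is in hand, the off-diagonal region reduces to a routine Riemann-sum calculation and the induction goes through.
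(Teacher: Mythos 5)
Your estimate for (\ref{3.3}) is the same computation as the paper's (\ref{3.5}), but the reduction of the supremum over $y\in\IL_N$ to the sum over a fixed ball is not quite justified as you state it: the inclusion $NC_M-Ny\subseteq B(0,2MN\sqrt d)$ holds only for $y$ in a bounded neighbourhood of $C_M$ (roughly $|y|_\infty\le 2M$), and fails for distant $y$, where $(\IZ^d\cap NC_M)-Ny$ lies far from the origin. The conclusion is of course still true (the sum only decreases when $Ny$ is far from $NC_M$), but one needs an actual argument; the paper invokes that $G_N|V|$ is harmonic off $C_M\cap\IL_N$ and vanishes at infinity, so its sup over $\IL_N$ is attained on $C_M\cap\IL_N$, and then bounds that sup as you do. This is a small, easily repaired gap, but as written your proof of (\ref{3.3}) does not cover all $y\in\IL_N$. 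A minor further point: the small-ball bound is really $\frac{1}{N^d}\sum_{|y'|\le\gamma}g_N(y')\le c\gamma^2+\frac{c}{N^2}$ (the extra $1/N^2$ from $y'=0$ is harmless once one takes $N\to\infty$ first), which the paper records explicitly below (\ref{3.8}) and you omit.

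For (\ref{3.4}) you take a genuinely different route than the paper. The paper first proves the single-operator uniform convergence $\sup_{C_M\cap\IL_N}|G_NVW-GVW|\to0$ for bounded continuous $W$ (its (\ref{3.6})), then propagates this by an operator-level induction to $\sup_{C_M\cap\IL_N}|(G_NV)^{n-1}1-(GV)^{n-1}1|\to0$ (its (\ref{3.7})), and only at the end pairs with $V$ via a Riemann sum. You instead expand $\langle V,(G_NV)^{n-1}1\rangle_{\IL_N}$ as an $n$-fold lattice sum and split directly into the region where all consecutive gaps exceed $\gamma$ (treated by (\ref{1.7}) plus a Riemann-sum argument) and its complement (treated by the $O(\gamma^2)$ small-ball bound, iterating (\ref{3.3}) for the remaining sums). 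Both proofs ultimately rest on the same two ingredients --- the small-ball estimate and (\ref{1.7}) --- and both work; the paper's version is more modular (it isolates (\ref{3.6}) for reuse), yours more hands-on. One cosmetic remark: you announce an ``induction on $n$'' but in fact never use an induction hypothesis for $n\ge2$ --- the near/off-diagonal decomposition is a direct analysis of the full $n$-fold sum --- so that word is best dropped.
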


\begin{proof}
We begin with the proof of (\ref{3.3}), and assume, without loss of generality, that $\|V\|_{L^\infty(C_M \cap \IL_N)} = 1$. By a classical harmonicity (or martingale) argument one sees that $\sup_{y \in \IL_N}$ $|G_N|V|(y)| = \sup_{y \in C_M \cap \IL_N} |G_N |V|(y)|$, since $V$ is supported in $C_M$. It then follows from (\ref{1.2}), (\ref{1.8}) that
\begin{equation}\label{3.5}
\begin{array}{r}
\sup\limits_{y \in C_M \cap \IL_N} |G_N |V|(y)| \le \mbox{\f $\dis\frac{c}{N^2}$} \;\Big(1 + \dsl_{0 < |x|_\infty \le 2 MN} \;\; \mbox{\f $\dis\frac{1}{|x|^{d-2}}$}\Big) \le c' M^2
\\
\\[-2ex]
\mbox{($x$ belongs to $\IZ^d$ in the sum)},
\end{array}
\end{equation}

\n
and the claim (\ref{3.3}) follows. Note that the continuity of $V$ was not needed for the proof of (\ref{3.3}). We then turn to the proof of (\ref{3.4}). As we now explain, it suffices to show that  for all bounded continuous functions $W$ on $\IR^d$,
\begin{equation}\label{3.6}
\lim\limits_N \;\sup\limits_{y \in C_M \cap \IL_N} |(G_N VW) (y) - (GVW)(y)| = 0.
\end{equation}

\n
Indeed, once (\ref{3.6}) is established, we note that $GVW$ is bounded continuous on $\IR^d$ as a convolution of the locally integrable function $G(\cdot)$ (see (\ref{1.3})) with the compactly supported continuous function $VW$. Then, we observe that for $n \ge 1$,
\begin{align*}
&\sup\limits_{y \in C_M \cap \IL_N} |(G_NV)^n 1(y) - (GV)^n 1(y)| \le \sup\limits_{y \in C_M \cap \IL_N} \big|(G_N V)\big((G_NV)^{n-1} 1- (GV)^{n-1}1\big) (y)\big| +
\\[1ex]
&\sup\limits_{y \in C_M \cap \IL_N} |(G_N V(GV)^{n-1} 1) (y) - (GV(GV)^{n-1} 1)(y)|.
\end{align*}

\n
Using induction over $n \ge 1$, we see that the last term tends to zero with $N$, by (\ref{3.6}) and the observation below (\ref{3.6}), and the first term after the inequality sign tends to zero with $N$ by the induction hypothesis and the first inequality in (\ref{3.3}). Thus, once (\ref{3.6}) is proved, it follows that
\begin{equation}\label{3.7}
\sup\limits_{C_M \cap \IL_N} |(G_N V)^{n-1} 1(\cdot) - (GV)^{n-1} 1(\cdot) | \underset{N}{\longrightarrow} 0, \;\mbox{for each $n \ge 1$}.
\end{equation}

\n
The claim (\ref{3.4}) now follows by a straightforward Riemann sum approximation.

\medskip
We now prove (\ref{3.6}) and introduce the shorthand $F = VW$. The function $F$ is continuous on $\IR^d$ with support in $C_M$. Then, for $\gamma \in (0,1)$, $N \ge 1$, and $y \in C_M \cap \IL_N$, we have
\begin{equation}\label{3.8}
\begin{array}{l}
|G_N F(y) - GF(y)| = \Big| \mbox{\f $\dis\frac{1}{N^d}$} \;\dsl_{y' \in \IL_N} g_N(y') \,F(y-y') - \dis\int_{\IR^d} G(y) \,F(y-y')\,dy'\Big| \le
\\[1ex]
\Big(\mbox{\f $\dis\frac{1}{N^d}$}  \;\dsl_{|y'|_\infty \le \gamma} g_N(y') + \dis\int_{|y'|_\infty \le \gamma} G(y') \,dy' \Big)\|F\|_{L^\infty(\IR^d)} \,+
\\[1ex]
\Big|\mbox{\f $\dis\frac{1}{N^d}$} \; \dsl_{\gamma \le |y'|_\infty \le 2M} g_N(y') \,F(y-y') - \dis\int_{\gamma \le |y'|_\infty \le 2M} G(y') F(y-y') \,dy'\Big| .
\end{array}
\end{equation}

\n
The functions $G(\cdot) \,F(y- \cdot)$ are uniformly continuous on $\{y' \in \IR^d; \gamma \le |y'|_\infty \le 2 M\}$ uniformly in $y \in C_M$. By (\ref{1.7}) and a Riemann sum approximation argument we see that the last term of (\ref{3.8}) tends to zero uniformly in $y \in C_M \cap \IL_N$. On the other hand, the second line of (\ref{3.8}) is bounded by $(\frac{c}{N^2} (\gamma^2 N^2 + 1) + c' \gamma^2)\,\|F\|_{L^\infty (\IR^d)}$. Taking a limsup in $N$, and then letting $\gamma$ tend to zero, we obtain (\ref{3.6}). The claim (\ref{3.4}) now follows, and this completes the proof of Proposition \ref{prop3.1}.
\end{proof}

We are now ready to state the convergence result for the law of $\cL^N$ in the constant intensity regime (i.e. $N^{d-2} u_N = d \alpha$, see (\ref{0.2})). The definition of $\cL_\alpha$ appears in (\ref{2.37}).

\begin{theorem}\label{theo3.2} (under (\ref{0.2}))
\begin{equation}\label{3.9}
\mbox{$\cL^N$ converges in distribution to $\cL_\alpha$, as $N \r \infty$.}
\end{equation}
\end{theorem}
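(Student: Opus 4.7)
The plan is to characterize the laws of $\cL^N$ and $\cL_\alpha$ by their Laplace functionals and exploit the explicit formulas (\ref{1.11}) and (\ref{2.42}), combined with Proposition \ref{prop3.1}, to pass to the limit. Since $\cL^N$ and $\cL_\alpha$ are nonnegative random Radon measures on $\IR^d$, by the standard criterion for vague convergence in distribution (Kallenberg), it suffices to prove that
\[
\ov{\IE}\bigl[\exp\bigl(-\langle\cL^N, V\rangle\bigr)\bigr] \longrightarrow \IE\bigl[\exp\bigl(-\langle\cL_\alpha, V\rangle\bigr)\bigr]
\]
for every nonnegative, continuous, compactly supported $V$ on $\IR^d$. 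I would first establish this under the additional smallness condition that $\|V\|_{L^\infty}$ is small enough (depending on the support of $V$) to guarantee, via (\ref{3.3}), that $\|G_N V\|_{L^\infty \to L^\infty} < 1$ uniformly in $N$ and simultaneously $\|G V\|_{L^\infty(\IR^d)} < 1$; the general case will follow by analytic continuation.

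Under this smallness assumption, applying (\ref{1.11}) with $-V$ in place of $V$ and using $\frac{u_N}{d}N^{d-2} = \alpha$ from (\ref{0.2}), together with (\ref{2.42}) applied to $-V$, yields
\[
\ov{\IE}\bigl[e^{-\langle\cL^N,V\rangle}\bigr] = \exp\bigl(-\alpha\,\langle V,(I+G_N V)^{-1}1\rangle_{\IL_N}\bigr),
\]
\[
\IE\bigl[e^{-\langle\cL_\alpha,V\rangle}\bigr] = \exp\bigl(-\alpha\,\langle V,(I+G V)^{-1}1\rangle\bigr).
\]
The convergence of Laplace functionals then reduces to the purely deterministic statement $\langle V,(I+G_N V)^{-1}1\rangle_{\IL_N} \to \langle V,(I+G V)^{-1}1\rangle$ as $N \to \infty$.

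The latter is handled by expanding both sides as Neumann series $\sum_{n \ge 1}(-1)^{n-1}\langle V,(G_N V)^{n-1}1\rangle_{\IL_N}$ and its continuous analogue. Termwise convergence is exactly (\ref{3.4}). A summable uniform majorant is provided by (\ref{3.3}), which gives $|\langle V,(G_N V)^{n-1}1\rangle_{\IL_N}| \le \|V\|_{L^1(\IL_N)}(c_0(M)\|V\|_{L^\infty})^{n-1}$, together with the elementary Riemann-sum estimate $\sup_N \|V\|_{L^1(\IL_N)} < \infty$. Dominated convergence over $n$ completes this step and yields the Laplace functional convergence in the small-$V$ regime.

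To remove the smallness restriction, fix an arbitrary nonnegative, continuous, compactly supported $V$ and consider $\psi_N(t) = \ov{\IE}[\exp(-t\langle\cL^N,V\rangle)]$ and $\psi(t) = \IE[\exp(-t\langle\cL_\alpha,V\rangle)]$. Each is the Laplace transform of a nonnegative random variable, hence extends to an analytic function on $\{\mathop{\rm Re} t > 0\}$ bounded by $1$ there. Applying the previous step to $tV$ yields $\psi_N(t) \to \psi(t)$ on an interval $(0, t_0)$ with $t_0 = t_0(V) > 0$. Vitali's theorem for uniformly bounded analytic families then promotes this to locally uniform convergence on the right half-plane, in particular at $t = 1$, which is (\ref{3.9}). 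The substantive technical content sits entirely in the Neumann-series step and rests on Proposition \ref{prop3.1}; the analytic continuation at the end is a routine complement, and I expect the main obstacle to be arranging the uniform-in-$N$ domination that legitimizes the termwise passage to the limit.
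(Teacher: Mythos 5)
Your proof is correct, and it rests on the same core ingredients as the paper's — the explicit transform identities (\ref{1.11}) and (\ref{2.42}), the Neumann-series expansion, and Proposition \ref{prop3.1} for termwise convergence with the $(c_0(M)\|V\|_\infty)^{n-1}$ domination. The organizational difference lies in how analyticity is deployed to remove the smallness restriction. The paper works with two-sided exponential moments $\ov{\IE}[e^{z\langle\cL^N,V\rangle}]$ for $z$ in a complex strip $|z|<c_0^{-1}$, and therefore needs a separate tightness step (the $\cosh$ bound (\ref{3.12})), a subsequence extraction, and a uniform-integrability argument via Billingsley before matching moment-generating functions and invoking analyticity of characteristic functions. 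You instead work directly with one-sided Laplace functionals $\ov{\IE}[e^{-t\langle\cL^N,V\rangle}]$ of nonnegative test functions — the natural objects for Kallenberg's criterion — establish convergence on $(0,t_0)$, and then promote to $t=1$ via Vitali on the right half-plane; tightness then comes for free from the Laplace-transform continuity theorem for nonnegative random variables, with no subsequence argument needed. This is a mild streamlining rather than a different proof. One small precision worth adding: the condition $\|G\,|V|\,\|_{L^\infty(\IR^d)}<1$ needed to apply (\ref{2.42}) is not a direct consequence of (\ref{3.3}) alone, which only controls the discrete operators $G_N$; as the paper notes, it follows from (\ref{3.3}) combined with (\ref{3.6}), which transfers the uniform bound to the continuum limit.
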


\begin{proof}
By Theorem 4.2, p.~22 of \cite{Kall76}, it suffices to show that for any continuous compactly supported function $V$ on $\IR^d$
\begin{equation}\label{3.10}
\mbox{$\langle \cL^N,V\rangle$ converges in distribution to $\langle \cL_\alpha, V\rangle$, as $N \r \infty$}.
\end{equation}

\n
Without loss of generality we assume that $V$ satisfies (\ref{3.2}) and $\|V\|_{L^\infty(\IR^d)} \le 1$. By (\ref{1.11}) and (\ref{3.3}) we see that for $|z| < \frac{1}{c_0}$ (with $c_0$ from (\ref{3.3})) and $N \ge 1$,
\begin{equation}\label{3.11}
\begin{split}
\overline{\IE} \big[\exp\{z \big\langle \cL^N, V\big\rangle\big\}\big] & = \exp\big\{\alpha \big\langle zV, (I-zG_N V)^{-1}1\big\rangle_{\IL_N}\big\}
\\[1ex]
& = \exp\big\{\alpha \dsl_{n \ge 1} z^n\big\langle V, (G_N V)^{n-1}1\big\rangle_{\IL_N}\big\}.
\end{split}
\end{equation}
In particular this shows (with (\ref{3.3})) that
\begin{equation}\label{3.12}
\sup\limits_{N \ge 1} \ov{\IE} \big[\cosh \big(r \big\langle \cL^N,V\big\rangle\big)\big] < \infty, \;\; \mbox{when $r < c_0^{-1}$}.
\end{equation}

\n
Thus, the laws of $\langle \cL^N,V\rangle$, $N \ge 1$, are tight, and the random variables $e^{z \langle \cL^N,V\rangle}$, $N \ge 1$, $|Rez| \le r(< c_0^{-1})$, are uniformly integrable. Hence, if along some subsequence $N_k$, $k \ge 1$, the random variables $\langle \cL^N,V\rangle$ converge in distribution to a random variable $U$, it follows from Theorem 5.4, p.~32 of \cite{Bill68}, that for $|z| < c_0^{-1}$ in $\IC$ one has
\begin{equation}\label{3.13}
\begin{split}
E[e^{zU}] & = \lim\limits_k \;\ov{\IE} [e^{z \langle \cL^{N_k},V\rangle}]
\\
& \!\!\! \stackrel{(\ref{3.11})}{=} \lim\limits_k \;\exp \Big\{\alpha \dsl_{n \ge 1} z^n \big\langle V, (G_{N_k} V)^{n-1} 1\big\rangle_{\IL_N}\Big\}
\\
&\!\!\!\!\!\!\! \stackrel{(\ref{3.3}),(\ref{3.4})}{=}  \exp \Big\{\alpha \dsl_{n \ge 1} z^n \big\langle V, (GV)^{n-1} 1\big\rangle\Big\} \stackrel{(\ref{2.42})}{=} \IE [e^{z \langle \cL_\alpha,V\rangle}],
\end{split}
\end{equation}

\n
where in the last step we have used the fact that $\|G\,|V|\,\|_{L^\infty(\IR^d)} \le c_0$ as a result of (\ref{3.3}) and (\ref{3.6}). By analyticity, the first and the last expression in (\ref{3.13}) are equal in the strip $|Rez| < c_0^{-1}$ of the complex plane. Hence $U$ and $\langle \cL_\alpha, V\rangle$ have the same characteristic function, and we have shown (\ref{3.10}). This completes the proof of Theorem \ref{theo3.2}.
\end{proof}

We now turn to the discussion of the high intensity regime (\ref{0.4}) (i.e. $N^{d-2} u_N \r \infty)$, and recall the definition of the random signed measure $\wh{\cL}^N$ from (\ref{0.5}):
\begin{equation}\label{3.14}
\wh{\cL}^N = \sqrt{\mbox{\f $\dis\frac{d}{2N^{d-2} u_N}$}} (\cL^N - \IE[\cL^N]) \stackrel{(\ref{1.12})}{=} \sqrt{\mbox{\f $\dis\frac{d}{2N^{d-2} u_N}$}} \Big(\cL^N - \mbox{\f $\dis\frac{u_N}{dN^2}$} \;\dsl_{y \in \IL_N} \delta_y\Big).
\end{equation}

\n
For a bounded measurable function $V$ on $\IR^d$ with compact support, the notation $\langle \wh{\cL}^N,V\rangle$ will refer to the integral of $V$ with respect to $\wh{\cL}^N$. In essence, the next result states that $\wh{\cL}^N$ tends in distribution to the massless Gaussian free field, see (\ref{1.15}). This fact will be made precise in Corollary \ref{cor3.5} below.

\begin{theorem}\label{theo3.3} (under (\ref{0.4}))

\medskip
When $V$ is continuous compactly supported function on $\IR^d$, then, as $N \r \infty$, 
\begin{align}
&\mbox{$\langle \wh{\cL}^N,V\rangle$ converges in distribution to a centered Gaussian variable} \label{3.15}
\\[-1ex]
&\mbox{with variance $E(V,V) \stackrel{(\ref{1.14})}{=} \dis\int V(y) \,G(y-y') \,V(y') \,dy \, dy'$}. \nonumber
\end{align}
\end{theorem}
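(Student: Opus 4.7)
The strategy follows closely the Laplace transform approach used in the proof of Theorem \ref{theo3.2}. I would fix a continuous compactly supported $V$ on $\IR^d$, say with support in $C_M = [-M,M]^d$, set $\lambda_N = \sqrt{d/(2N^{d-2}u_N)}$ (so $\lambda_N \to 0$ under (\ref{0.4})), and compute the Laplace transform of $\langle \wh{\cL}^N, V\rangle = \lambda_N(\langle\cL^N,V\rangle - \ov{\IE}[\langle\cL^N,V\rangle])$ for $z \in \IC$ in a fixed small disk $|z| < r_0$. For $N$ sufficiently large, $|z|\lambda_N \,c_0(M)\|V\|_{L^\infty} < 1$ by (\ref{3.3}), so identity (\ref{1.11}) applies with $z\lambda_N V$ in place of $V$ and gives
\begin{equation*}
\ov{\IE}\big[\exp\{z\langle\wh{\cL}^N,V\rangle\}\big] = \exp\Big\{-z\lambda_N\,\mbox{\f $\dis\frac{u_N}{d}$}\,N^{d-2}\langle V,1\rangle_{\IL_N} + \mbox{\f $\dis\frac{u_N}{d}$}\,N^{d-2} \dsl_{n \ge 1} (z\lambda_N)^n \big\langle V, (G_NV)^{n-1}1\big\rangle_{\IL_N}\Big\}.
\end{equation*}

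The first key observation is that the $n=1$ term of the series exactly cancels the centering, since $\ov{\IE}[\langle\cL^N,V\rangle] = \frac{u_N}{dN^2}\sum_{y \in \IL_N}V(y) = \frac{u_N}{d}N^{d-2}\langle V,1\rangle_{\IL_N}$ by (\ref{1.12}). The prefactor of the $n$-th surviving term is
\[
\mbox{\f $\dis\frac{u_N}{d}$}\,N^{d-2}(z\lambda_N)^n = \mbox{\f $\dis\frac{1}{2}$}\, z^n \lambda_N^{n-2}, \qquad n \ge 2,
\]
using $u_N N^{d-2}\lambda_N^2 = d/2$. Thus in the sum only the $n=2$ term has a finite non-trivial limit, while for $n \ge 3$ the extra factor $\lambda_N^{n-2}$ vanishes.

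The second step is to pass to the limit inside the series. By (\ref{3.3}) and the compact support of $V$, $|\langle V,(G_NV)^{n-1}1\rangle_{\IL_N}| \le C(M,V)\,c_0(M)^{n-1}\|V\|_{L^\infty}^{n-1}$, which gives a summable geometric majorant once $|z|$ is small; dominated convergence together with (\ref{3.4}) then yields
\[
\lim_N \mbox{\f $\dis\frac{1}{2}$}\, z^2 \big\langle V, G_N V\big\rangle_{\IL_N} = \mbox{\f $\dis\frac{z^2}{2}$}\, \big\langle V, GV\big\rangle = \mbox{\f $\dis\frac{z^2}{2}$}\,E(V,V),
\]
while all $n \ge 3$ contributions vanish. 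Hence for every $z$ in some fixed complex disk $|z|<r_0$,
\[
\ov{\IE}\big[\exp\{z\langle\wh{\cL}^N,V\rangle\}\big] \longrightarrow \exp\big\{\mbox{\f $\dis\frac{z^2}{2}$}\,E(V,V)\big\}.
\]

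The final step is to upgrade this to convergence in distribution, exactly as in (\ref{3.12})--(\ref{3.13}): the uniform boundedness of the Laplace transforms on the real interval $(-r_0,r_0)$ implies $\sup_N \ov{\IE}[\cosh(r\langle\wh{\cL}^N,V\rangle)]<\infty$ for $r<r_0$, giving tightness of the laws of $\langle\wh{\cL}^N,V\rangle$ and uniform integrability of $e^{z\langle\wh{\cL}^N,V\rangle}$ for $|\operatorname{Re} z|\le r<r_0$. Theorem 5.4 of \cite{Bill68} together with analyticity then identifies every subsequential limit via its MGF as the centered Gaussian with variance $E(V,V)$, proving (\ref{3.15}).

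\medskip
\noindent\emph{Anticipated main difficulty.} Everything else is bookkeeping; the delicate point is showing that the remainder $\sum_{n \ge 3}$ is genuinely negligible in the limit. This requires the uniform-in-$N$ estimate from (\ref{3.3}) to produce a summable majorant independent of $N$ for $|z|$ small, so that dominated convergence applies and the factor $\lambda_N^{n-2}\to 0$ can be exploited term by term. Once this uniformity is in hand, the cancellation of the $n=1$ term by the centering and the identification of the $n=2$ term through (\ref{3.4}) produce the Gaussian MGF automatically.
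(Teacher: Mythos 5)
Your proposal is correct and follows essentially the same route as the paper's proof: assume WLOG $V$ supported in $C_M$ with $\|V\|_\infty \le 1$, introduce the normalization $\lambda_N = 1/a_N$ (the paper writes $a_N = (\frac{2}{d}N^{d-2}u_N)^{1/2}$ and assumes $a_N\ge 1$ for all $N$, which is the same as your "for $N$ sufficiently large"), apply identity (\ref{1.11}) with $z\lambda_N V$, observe that the centering exactly kills the $n=1$ term of the Neumann series, note that the prefactor $\frac{u_N}{d}N^{d-2}\lambda_N^n = \frac12 a_N^{2-n}$ equals $\frac12$ at $n=2$ and tends to $0$ for $n\ge 3$, and invoke Proposition \ref{prop3.1} together with the geometric-majorant bound from (\ref{3.3}) to pass to the limit term by term, then finish with the tightness/uniform-integrability/analyticity argument as below (\ref{3.13}). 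No gap; the anticipated difficulty you flag (dominated convergence over the series, uniformly in $N$) is precisely what (\ref{3.3}) resolves, exactly as you argue.
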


\begin{proof}
Without loss of generality, we assume, for convenience, that (\ref{3.2}) holds and that $\|V\|_{L^\infty (\IR^d)} \le 1$. Further, we use the shorthand $a_N = (\frac{2}{d} N^{d-2} u_N)^{\frac{1}{2}}$, and assume that $a_N \ge 1$, for all $N \ge 1$ (this possibly involves the replacement of finitely many $u_N$). We then see that for $|z| < c_0^{-1}$ and $N \ge 1$, 
\begin{equation}\label{3.16}
\begin{split}
\ov{\IE}[e^{z \langle \wh{\cL}^N,V\rangle}] &= \ov{\IE} \big[\exp \Big\{\mbox{\f $\dis\frac{z}{a_N}$} \big(\big\langle \cL^N,V\big\rangle - \mbox{\f $\dis\frac{u_N}{d}$} \;N^{d-2} \big\langle V,1\big\rangle_{\IL_N}\big)\Big\}\Big]
\\[1ex]
&\!\!\!\!\!\!\!\! \stackrel{(\ref{1.11}),(\ref{3.3})}{=} \exp\Big\{\mbox{\f $\dis\frac{u_N}{d}$} \;N^{d-2} \dsl_{n \ge 2} \;\mbox{\f $\dis\frac{z^n}{a^n_N}$} \big\langle V, (G_N V)^{n-1} 1\big\rangle_{\IL_N}\Big\}.
\end{split}
\end{equation}

\n
By Proposition \ref{prop3.1} and the fact that $\frac{u_N}{d} \;N^{d-2} / a_N^n$ equals $\frac{1}{2}$ when $n = 2$ and is bounded by $\frac{1}{2}$ and converges to zero for $N$ tending to infinity, when $n \ge 3$, we find that
\begin{equation}\label{3.17}
\ov{\IE}[e^{z \langle \wh{\IL}^N,V\rangle}] \underset{N \r \infty}{\longrightarrow} e^{\frac{z^2}{2} \langle V,GV\rangle} \;\; \mbox{for} \; |z| < c_0^{-1}.
\end{equation}

\n
The claim (\ref{3.15}) then follows by similar arguments as below (\ref{3.13}). \end{proof}

\begin{remark}\label{rem3.4} \rm  By (\ref{1.12}) one sees that for any $N \ge 1$
\begin{equation*}
\ov{\IE} \Big[\dsl_{y \in \IL_N} (1 + |y|)^{-(d+1)} L_{Ny,u_N}\Big] < \infty.
\end{equation*}

\n
Thus on a set of full $\ov{\IP}$-measure, $(1 + |y|)^{-(d+1)}$, and hence all functions $V \in \cS(\IR^d)$, are $\wh{\cL}^N$-integrable. Redefining $\wh{\cL}^N$ on a negligible set as (for instance) being equal to zero, we see that $\wh{\cL}^N(\o)$ is a tempered distribution for each $\o \in \ov{\Omega}$, and $\langle \wh{\cL}^N, V\rangle$ a random variable on $(\ov{\Omega}, \ov{\cA}, \ov{\IP})$ for each $V \in \cS(\IR^d)$. Hence, we can also view $\wh{\cL}^N$ as an $\cS'(\IR^d)$-valued random variable on  $(\ov{\Omega}, \ov{\cA}, \ov{\IP})$. \hfill $\square$
\end{remark}

In the next result about convergence in distribution of $\wh{\cL}^N$, the space $\cS'(\IR^d)$ is tacitly endowed with the strong topology (see p.~60 of \cite{GelfVile64}, or p.~5,~6 of \cite{Ito84}). We recall the notation $P^G$ for the law on $\cS'(\IR^d)$ of the massless Gaussian free field (see (\ref{1.15})). The next result states the convergence of $\wh{\cL}_N$ in distribution to the massless Gaussian free field in the high intensity regime.

\begin{corollary}\label{cor3.5} (under (\ref{0.4}))
\begin{equation}\label{3.18}
\mbox{$\wh{\cL}^N$ converges in law to $P^G$ as $N \r \infty$}.
\end{equation}
\end{corollary}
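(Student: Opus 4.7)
The plan is to upgrade the one-dimensional convergence of Theorem \ref{theo3.3} to convergence in law of $\wh{\cL}^N$ (viewed via Remark \ref{rem3.4} as an $\cS'(\IR^d)$-valued random variable) by means of a nuclear-space version of L\'evy's continuity theorem.

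The first step is to extend Theorem \ref{theo3.3} from $V \in C_c(\IR^d)$ to $V \in \cS(\IR^d)$. Given $V \in \cS(\IR^d)$, one introduces cutoffs $V_R = V \chi_R$ with $\chi_R \in C_c(\IR^d)$ equal to $1$ on $B(0,R)$, and bounds the variance of $\langle \wh{\cL}^N, V - V_R\rangle$ uniformly in $N$. From the computation in (\ref{3.16}) this variance equals $\langle V - V_R, G_N(V - V_R)\rangle_{\IL_N}$; using the pointwise bound $g(x) \le c/(1 + |x|)^{d-2}$ together with a Riemann-sum estimate, one dominates this quantity uniformly in $N$ by a constant multiple of $\int\!\!\int |V - V_R|(y)\, |y - y'|^{2 - d}\, |V - V_R|(y')\,dy\,dy'$, which vanishes as $R \r \infty$ by dominated convergence thanks to the Schwartz decay of $V$. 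A standard three-$\varepsilon$ argument combined with Theorem \ref{theo3.3} applied to $V_R$ then shows that $\langle \wh{\cL}^N, V\rangle$ converges in law to a centered Gaussian with variance $E(V,V)$ for every $V \in \cS(\IR^d)$.

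The second step is multivariate: the Cram\'er--Wold device, applied to arbitrary real linear combinations $\sum_{j=1}^k t_j V_j \in \cS(\IR^d)$, promotes the above to joint convergence of $(\langle \wh{\cL}^N, V_j\rangle)_{1 \le j \le k}$ in distribution to a centered Gaussian vector with covariance $\big(E(V_i, V_j)\big)_{i,j}$, that is, to the joint law of $(\langle \Phi, V_j\rangle)_{1\le j\le k}$ under $P^G$. Equivalently, the characteristic functionals $V \mapsto \ov{\IE}\big[\exp\{i\langle \wh{\cL}^N, V\rangle\}\big]$ converge pointwise on $\cS(\IR^d)$ to $V \mapsto \exp\{-\frac{1}{2} E(V,V)\}$, which is continuous on $\cS(\IR^d)$ by the continuity bound stated below (\ref{1.14}).

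The third and last step is to invoke the L\'evy--Minlos continuity theorem on the nuclear space $\cS(\IR^d)$ (see e.g.\ \cite{GelfVile64}, \cite{Ito84}) to deduce from the pointwise convergence of the characteristic functionals together with the continuity of the limit that $\wh{\cL}^N$ converges in law to $P^G$ on $\cS'(\IR^d)$ endowed with the strong topology. The main obstacle in this plan is the uniform second-moment bound of the first step, since the Riemann-sum comparison underlying Proposition \ref{prop3.1} is only stated for compactly supported test functions, and one must verify that the tail contribution $\langle V - V_R, G_N(V - V_R)\rangle_{\IL_N}$ is dominated uniformly in $N$ by a continuum quantity vanishing as $R \r \infty$. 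Once this estimate is in hand, the Cram\'er--Wold step and the nuclear-space continuity theorem close the argument.
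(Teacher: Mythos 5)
Your overall route is the same as the paper's: reduce via the nuclear-space L\'evy continuity theorem to one-dimensional convergence of $\langle\wh{\cL}^N,V\rangle$ for every $V\in\cS(\IR^d)$, then get this by truncating $V$ and controlling the truncation error in $L^2$ uniformly in $N$. Two points where you diverge from the paper. First, your Cram\'er--Wold step is redundant: the L\'evy--Minlos theorem only requires pointwise convergence of the characteristic functionals $V\mapsto\ov{\IE}[e^{i\langle\wh{\cL}^N,V\rangle}]$, which is nothing more than the one-dimensional convergence you already established in your first step; passing through joint finite-dimensional convergence is harmless but adds nothing. Second — and this is the real technical content — your tail estimate is attempted and left open. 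You assert that the variance $\langle V-V_R, G_N(V-V_R)\rangle_{\IL_N}$ can be dominated uniformly in $N$ by $\int\!\!\int |V-V_R|(y)\,|y-y'|^{2-d}\,|V-V_R|(y')\,dy\,dy'$, but a Riemann-sum domination of a singular kernel by its integral requires care (near the diagonal, and because a grid point's value of $f$ need not be controlled by the cell average), and you correctly flag this as the main obstacle without resolving it. Also, note that $V-V_R$ is \emph{not} compactly supported, so (\ref{3.16}) does not directly yield its second moment; the paper gets the inequality $\ov{\IE}[\langle\wh{\cL}^N,V-V_L\rangle^2]\le\langle V-V_L,G_N(V-V_L)\rangle_{\IL_N}$ by approximating and invoking Fatou. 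The paper then avoids the singular Riemann-sum issue entirely: it applies the uniform bound (\ref{3.3}) to $\chi(\cdot)\,|V|(\cdot+a)$ for each $a\in\IZ^d$, together with the uniform boundedness of $g_N$ away from the diagonal, to get $\rho=\sup_N\|G_N|V|\|_{L^\infty(\IL_N)}<\infty$, and concludes
\begin{equation*}
\langle V-V_L,\,G_N(V-V_L)\rangle_{\IL_N}\le c\,\rho\dsl_{a\in\IZ^d}\sup_{y\in a+[0,1)^d}\big|(1-\chi_L(y))V(y)\big|\underset{L\to\infty}{\longrightarrow}0,
\end{equation*}
which gives the uniform-in-$N$ vanishing without any Riemann-sum comparison for the Green kernel. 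Your plan is sound in outline, but as written it has a gap precisely at the step you single out; the paper's $\rho$-based bound is the cleaner way to close it.
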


\begin{proof}
By the L\'evy continuity theorem on $\cS'(\IR^d)$ (see Th\'eor\`eme 2, p.~516 of \cite{Meye65}, and also Th\'eor\`eme 3.6.5, p.~69 of \cite{Fern67}), it suffices to show that for each $V \in \cS(\IR^d)$, the variables $\langle \wh{\cL}^N, V\rangle$ converge in distribution to $\langle \Phi, V\rangle$ (under $P^G$), namely a centered Gaussian variable with variance $E(V,V)$, (see (\ref{1.15})).

\medskip
We thus consider $V \in \cS(\IR^d)$ and a continuous $[0,1]$-valued compactly supported function $\chi_L$ on $\IR^d$ equal to $1$ on $C_2$ (see (\ref{3.2}) for notation). We write $\chi_L(\cdot) = \chi(\frac{\cdot}{L})$ for $L \ge 1$ and define $V_L = \chi_L V$.

\medskip
By dominated convergence $E(V_L,V_L) \r E(V,V)$, as $L \r \infty$. By Theorem \ref{theo3.3} our claim (\ref{3.18}) will thus follow once we show that
\begin{equation}\label{3.19}
\lim\limits_{L \r \infty} \;\sup\limits_{N \ge 1} \;\ov{\IE} \big[\big\langle \wh{\cL}^N, V - V_L \big\rangle^2\big] = 0.
\end{equation}

\n
By differentiating, (\ref{3.16}) twice in $z$ at the origin and approximating $V$ by $V_{L'}$, with $L' \r \infty$, we see from Fatou's Lemma that
\begin{equation}\label{3.20}
\ov{\IE} \big[\big\langle \wh{\cL}^N, V - V_L\big\rangle^2\big] \le \big\langle V-V_L, \,G_N(V-V_L)\big\rangle_{L_N}, \; \mbox{for $N \ge 1$, $L \ge 1$}.
\end{equation}
\n
Applying (\ref{3.3}) to $\chi(\cdot) \,|V|(\cdot + a)$ for $a \in \IZ^d$ and using that $\sup\{g_N(y)$; $y \in \IL_N$, $|y|_\infty \ge 1\}$ is uniformly bounded in $N$ (see (\ref{1.7})), we see that $\rho = \sup_{N \ge 1} \|G_N \,|V|\,\|_{L^\infty(\IL_N)} < \infty$, and that
\begin{equation*}
\big\langle V-V_L, \,G_N (V-V_L) \big\rangle_{\IL_N} \le c \, \rho \dsl_{a \in \IZ^d} \;\sup\limits_{y \in a + [0,1)^d} \big|\big(1-\chi_L(y)\big) \,V(y)\big| \underset{L \r \infty}{\longrightarrow} 0.
\end{equation*}

\n
The claim (\ref{3.19}) follows, and this completes the proof of Corollary \ref{cor3.5}.
\end{proof}

\section{Scaling limits via the isomorphism theorem}
\setcounter{equation}{0}

In this short section we revisit Theorem \ref{theo3.3} under the perspective of the isomorphism theorem stated in (\ref{0.6}). This offers a different route to Theorem \ref{theo3.3} for sequences $(u_N)_{N \ge 1}$ in a ``sufficiently high intensity regime''. The special role of dimension $3$ where we recover the full range (\ref{0.4}) will be highlighted. Notation for the Gaussian free field on $\IZ^d$ have been introduced at the end of Section $1$. In particular, $(\varphi_x)_{x \in \IZ^d}$ stands for the canonical field on $\IR^{\IZ^d}$ and $P^g$ for the canonical law of the Gaussian free field.

\medskip
We introduce the random signed measure on $\IR^d$
\begin{equation}\label{4.1}
\Phi^N = \mbox{\f $\dis\frac{1}{\sqrt{d} \,N^{\frac{d}{2} + 1}}$} \;\dsl_{x \in \IZ^d} \varphi_x \,\delta_{\frac{x}{N}}.
\end{equation}

\n
Given a continuous compactly supported function $V$ on $\IR^d$ and a positive sequence $(u_N)_{N \ge 1}$ the identity (\ref{0.6}) implies that under $P^g \otimes \ov{P}$
\begin{equation}\label{4.2}
\begin{array}{l}
\fr \;\dsl_{z \in \IZ^d} V\Big(\mbox{\f $\dis\frac{x}{N}$}\Big) \big(\varphi^2_x - g(0)\big) + \dsl_{x \in \IZ^d} V \Big(\mbox{\f $\dis\frac{x}{N}$}\Big)(L_{x,u_N} - u_N) \stackrel{\rm law}{=}
\\[2ex]
\fr \;\dsl_{z \in \IZ^d} V\Big(\mbox{\f $\dis\frac{x}{N}$}\Big) \big(\varphi^2_x - g(0)\big) + \sqrt{2 u_N} \;\dsl_{x \in \IZ^d} V\Big(\mbox{\f $\dis\frac{x}{N}$}\Big)\, \varphi_x.
\end{array}
\end{equation}

\n
With the notation (\ref{0.5}) and (\ref{4.1}), this identity can be rewritten under the form
\begin{equation}\label{4.3}
\begin{array}{l}
\fr \;\dsl_{z \in \IZ^d} V\Big(\mbox{\f $\dis\frac{x}{N}$}\Big) \big(\varphi^2_x - g(0)\big) + \sqrt{2d \,u_N} \; N^{\frac{d}{2} +1} \big\langle \wh{\cL}^N,V\big\rangle   \stackrel{\rm law}{=}
\\[2ex]
\fr \;\dsl_{z \in \IZ^d} V\Big(\mbox{\f $\dis\frac{x}{N}$}\Big) \big(\varphi^2_x - g(0)\big) + \sqrt{2d \,u_N} \; N^{\frac{d}{2} +1} \big\langle \Phi^N,V\big\rangle .
\end{array}
\end{equation}

\n
Importantly, note that $\wh{\cL}^N$ involves $u_N$ in its definition, but $\Phi^N$ does not (cf.~(\ref{4.1})).

\medskip
In the next lemma we look at the size of the terms in (\ref{4.3}) which involve the Gaussian free field. To this end, we introduce the sequence
\begin{equation}\label{4.4}
b_N = \left\{ \begin{array}{ll}
N^4 & \mbox{when $d=3$},
\\
N^4 \log N & \mbox{when $d=4$},
\\
N^d & \mbox{when $d \ge 5$}.
\end{array}\right.
\end{equation}
\begin{lemma}\label{lem4.1}
\begin{align}
&\mbox{$\big\langle\Phi^N,V\big\rangle$ is a centered Gaussian variable with variance $\big\langle V,G_N V\big\rangle_{\IL_N}$}. \label{4.5}
\\[1ex]
&\mbox{$\big\langle\Phi^N,V\big\rangle$ converges in distribution to a centered Gaussian variable} \label{4.6}
\\[-0.5ex]
&\mbox{with variance $\mbox{\f $\dis\int$} V(y) \,G(y-y') \,V(y') \,dy \,dy'$, as $N \r \infty$}. \nonumber
\\[1ex]
& \mbox{\f $\dis\frac{1}{b_N}$} \;E^{P^g} \Big[\Big(\dsl_{x \in \IZ^d} V\Big(\mbox{\f $\dis\frac{x}{N}$}\Big) \big(\varphi^2_x - g(0)\big)\Big)^2\Big] \; \mbox{has a positive limit when $N \r \infty$}. \label{4.7}
\end{align}
\end{lemma}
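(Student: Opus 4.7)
The plan is to treat the three assertions in sequence: (\ref{4.5}) is a direct Gaussian computation, (\ref{4.6}) is its immediate consequence via Proposition \ref{prop3.1}, and (\ref{4.7}) requires separate asymptotic analysis in the three regimes $d=3$, $d=4$, and $d\geq 5$.

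For (\ref{4.5}), note that $\langle \Phi^N, V\rangle = \frac{1}{\sqrt{d}\,N^{d/2+1}} \sum_x V(x/N)\,\varphi_x$ is a finite linear combination (finite because $V$ has compact support) of the centered Gaussian family $(\varphi_x)$, hence centered Gaussian with variance
\[
\frac{1}{dN^{d+2}} \sum_{x,x'} V(x/N)\, V(x'/N)\, g(x-x').
\]
The scaling identity $g(x-x') = dN^{2-d} g_N((x-x')/N)$ from (\ref{1.6}) turns this into $\frac{1}{N^{2d}} \sum_{y,y' \in \IL_N} V(y) V(y') g_N(y-y') = \langle V, G_N V\rangle_{\IL_N}$. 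For (\ref{4.6}), the $n=2$ case of (\ref{3.4}) in Proposition \ref{prop3.1} yields $\langle V, G_N V\rangle_{\IL_N} \to \langle V, GV\rangle = \int V(y) G(y-y') V(y')\,dy\,dy'$, and convergence in distribution of centered Gaussians follows from convergence of variances.

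For (\ref{4.7}), Wick's theorem applied to the centered Gaussian field $(\varphi_x)$ gives $E^{P^g}[(\varphi_x^2 - g(0))(\varphi_{x'}^2 - g(0))] = 2\,g(x-x')^2$, reducing the problem to $\frac{2}{b_N} S_N$ with $S_N := \sum_{x,x'} V(x/N)\, V(x'/N)\, g(x-x')^2 = \sum_z g(z)^2 A_N(z)$, where $A_N(z) = \sum_{x'} V((x'+z)/N)\, V(x'/N)$. For $V$ continuous and compactly supported, $A_N(z)/N^d$ converges uniformly in $z$ to $\phi(z/N)$, with $\phi(w) := \int V(y+w)V(y)\,dy$ continuous, compactly supported, and $\phi(0) = \int V^2$. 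I then insert the asymptotics $g(z)^2 \sim d^2 G(z)^2$ from (\ref{1.2}), (\ref{1.3}); the error $g(z)^2 - d^2 G(z)^2 = O(|z|^{2-2d})$ contributes at most $O(N^d)$ in every dimension, which is negligible compared with $b_N$.

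In $d=3$, $G(z)^2 \propto |z|^{-2}$ is locally integrable on $\IR^3$, and the change of variables $w = z/N$ turns $S_N/N^4$ into a genuine Riemann sum converging to $d^2 \int V(y) V(y') G(y-y')^2\,dy\,dy'$; this limit is strictly positive for $V\not\equiv 0$ because $G^2$ is positive-definite (Schur's product theorem, applied to the positive-definite kernel $G$), or equivalently by Plancherel using that $\widehat{G^2} = \hat G \ast \hat G > 0$ since $\hat G$ is a positive multiple of $|\xi|^{-2}$. In $d=4$, $|z|^{-4}$ fails to be locally integrable, so I split $A_N(z)/N^d = \phi(0) + (\phi(z/N) - \phi(0))$: the first piece produces the leading $N^4\log N$ contribution via the elementary estimate $\sum_{1\leq|z|\leq RN,\,z\in\IZ^4}|z|^{-4} = c\log N + O(1)$, while the Lipschitz bound $|\phi(z/N)-\phi(0)| \leq L|z|/N$ combined with $\sum_{|z|\leq RN,\,z\in\IZ^4}|z|^{-3} = O(N)$ forces the correction to be $O(N^4)=o(N^4\log N)$, yielding a positive limit proportional to $\int V^2$. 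In $d\geq 5$, $\sum_z g(z)^2 < \infty$ and dominated convergence give $S_N/N^d \to (\int V^2)\,(\sum_z g(z)^2) > 0$. The main technical difficulty will be the uniform control of the error term in the logarithmic expansion for $d=4$.
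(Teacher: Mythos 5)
Your treatment of (\ref{4.5}) and (\ref{4.6}) is the same computation as in the paper, so nothing to say there. For (\ref{4.7}) the skeleton matches the paper (the identity $E^{P^g}[(\varphi_x^2-g(0))(\varphi_{x'}^2-g(0))]=2g^2(x-x')$ followed by a case analysis in $d=3,4,\ge 5$), but you deviate in two genuine ways. First, your reorganization of the double sum as $S_N=\sum_z g(z)^2 A_N(z)$ with $A_N(z)/N^d$ tending uniformly to the autocorrelation $\phi(z/N)$ is a clean alternative to the paper's direct Riemann-sum manipulation. Second, and more substantially, for $d=3$ the paper proves strict positivity of $\int V\,G^2\,V$ by exhibiting a nonnegative representation $\int ds\,dt\,dz\,dz'\bigl(\int V(y)p_{s/2}(y,z)p_{t/2}(y,z')\,dy\bigr)^2$ (a semigroup/Chapman--Kolmogorov argument as in \cite{Szni98a}, Prop.~4.8), whereas you invoke positive definiteness of $G^2$ via the Schur product theorem or via Plancherel and $\widehat{G^2}\propto|\xi|^{-1}>0$. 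Both are correct; yours is shorter but trades a probabilistic identity for a harmonic-analytic one.

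There is, however, one real gap in the $d=4$ case: you invoke a \emph{Lipschitz} bound $|\phi(z/N)-\phi(0)|\le L|z|/N$, but under the standing hypothesis that $V$ is merely continuous with compact support, $\phi=V*\check V$ need not be Lipschitz (e.g.\ $\hat\phi=|\hat V|^2$ has no reason to satisfy $|\xi|\hat\phi\in L^1$). The conclusion is still correct, but the argument must be replaced: use only uniform continuity of $\phi$, split the sum over $|z|\le\delta N$ and $\delta N<|z|\le RN$, and observe that
\begin{equation*}
\frac{1}{\log N}\dsl_{0<|z|\le\delta N}\frac{\omega(|z|/N)}{|z|^4}\le c\,\omega(\delta)\,\frac{\log(\delta N)}{\log N}\xrightarrow[N\to\infty]{}c\,\omega(\delta),
\qquad
\frac{1}{\log N}\dsl_{\delta N<|z|\le RN}\frac{O(1)}{|z|^4}=O\Bigl(\frac{1}{\log N}\Bigr),
\end{equation*}
then let $\delta\to 0$. (This is in effect what the paper's ``$|x-x'|\le L$ or $|x-x'|\ge\gamma N$'' decomposition achieves.) With that repair your proof is complete; a smaller remark is that your claim that the error $g^2-d^2G^2=O(|z|^{2-2d})$ contributes $O(N^d)\ll b_N$ ``in every dimension'' fails for $d\ge 5$ where $b_N=N^d$, but this is harmless since you correctly avoid the $g\sim dG$ asymptotics altogether in that case.
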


\begin{proof}
We begin with (\ref{4.5}) and note that $\langle \Phi^N,V\rangle$ is a centered Gaussian variable with variance $\frac{1}{dN^{d+2}} \;\sum_{x,x'\in \IZ^d} V(\frac{x}{N})\,g(x,x') \,V(\frac{x'}{N}) \underset{(\ref{1.8})}{\stackrel{(\ref{1.5}),(\ref{1.6})}{=}} \langle V,G_N V\rangle_{\IL_N}$, whence the claim.

\medskip
Next, (\ref{4.6}) is an immediate consequence of (\ref{4.5}) and (\ref{3.4}) (with $n=2$). We then turn to (\ref{4.7}). We note that from Lemma 5.2.6, p.~201 of \cite{MarcRose06}, one has the identity 
\begin{equation}\label{4.8}
E^{P^g} \big[\big(\varphi^2_x - g(0)\big) \big(\varphi^2_{x'} - g(0)\big)\big] = 2 g^2(x,x'), \;\mbox{for $x,x' \in \IZ^d$}.
\end{equation}
As a result we find that
\begin{equation}\label{4.9}
\mbox{\f $\dis\frac{1}{b_N}$} \;E^{P^g} \Big[\Big(\dsl_{x \in \IZ^d} V\Big(\mbox{\f $\dis\frac{x}{N}$}\Big) \big(\varphi^2_x - g(0)\big)\Big)^2\Big]  = \mbox{\f $\dis\frac{2}{b_N}$} \;\dsl_{x,x' \in \IZ^d} V\Big(\mbox{\f $\dis\frac{x}{N}$}\Big)\,g^2(x-x') \,V \Big(\mbox{\f $\dis\frac{x'}{N}$}\Big).
\end{equation}

\n
When $d=3$, $G(y) = c\,|y|^{-1}$ (see (\ref{1.3})) is locally square integrable on $\IR^3$, and the above quantity equals (see (\ref{1.6}))
\begin{equation}\label{4.10}
\mbox{\f $\dis\frac{18}{N^6}$} \;\dsl_{y,y' \in \IL_N} V(y)\,g^2_N(y-y')\,V(y') \underset{N \r \infty}{\longrightarrow} 18 \dis\int V(y) \,G^2(y-y')\,V(y') \,dy \, dy',
\end{equation}

\n
as can be seen by separately considering the terms $|y-y'| \ge \gamma$ and $|y - y'| < \gamma$, with $y,y' \in \IL_N$, in the spirit of what was done below (\ref{3.8}), letting first $N$ go to infinity, and then $\gamma$ go to zero. Note that the integral in (\ref{4.10}) is equal to (with $p_t(\cdot,\cdot)$ the Brownian transition density, see above (\ref{2.1})):
\begin{equation*}
\dis\int_{\IR_+ \times \IR_+ \times \IR^d \times \IR^d} ds \,dt \,dz\,dz' \Big(\dis\int_{\IR^d} dy \,V(y) \,p_{\frac{s}{2}}(y,z) \,p_{\frac{t}{2}}(y,z')\Big)^2 
\end{equation*}

\n
by a similar calculation as in Proposition 4.8, p.~75 of \cite{Szni98a}. This quantity is positive when $V$ is not identically equal to zero, see the bottom of p.~75 of \cite{Szni98a} for a similar argument.

\medskip
When $d=4$, $b_N = N^4 \log N$, and given $L \ge 1$ and $\gamma > 0$, the contribution in the right-hand side of (\ref{4.9}) of the terms with $|x-x'| \le L$ or $|x-x'| \ge \gamma N$ is $O(\frac{1}{\log N})$, as $N$ tends to infinity, by (\ref{1.3}). Combined with the fact that $\frac{1}{\log N} \;\sum_{0 < |x| \le N} \;\frac{1}{|x|^4}$ tends to a positive limit, when $N$ tends to infinity (as can be seen by looking at the difference of the sum and the integral $\int_{B(0,N) \backslash B(0,1)} \frac{dx}{|x|^4}$), we find that when $d=4$,
\begin{equation}\label{4.11}
\mbox{\f $\dis\frac{2}{N^4 \log N}$} \;\dsl_{x,x' \in \IZ^4} \,V\Big(\mbox{\f $\dis\frac{x}{N}$}\Big) \,g^2(x-x') \,V\Big(\mbox{\f $\dis\frac{x'}{N}$}\Big) \underset{N}{\longrightarrow} c \dis\int_{\IR^4} V^2(y) \,dy.
\end{equation} 
Finally, when $d \ge 5$, the sum $\sum_{x \in \IZ^d} g^2(x)$ converges by (\ref{1.3}), and hence
\begin{equation}\label{4.12}
\mbox{\f $\dis\frac{2}{N^d}$} \;\dsl_{x,x' \in \IZ^4} \,V\Big(\mbox{\f $\dis\frac{x}{N}$}\Big) \,g^2(x-x') \,V\Big(\mbox{\f $\dis\frac{x'}{N}$}\Big) \underset{N}{\longrightarrow} c \dis\int_{\IR^d} V^2(y) \,dy.
\end{equation} 

\n
We have thus shown (\ref{4.7}), and this concludes the proof of Lemma \ref{lem4.1}.
\end{proof}

We will now conclude this short section with some remarks, in particular linking together (\ref{4.3}), Lemma \ref{lem4.1}, and Theorem \ref{theo3.3}.

\begin{remark}\label{rem4.2} \rm ~

\medskip\n
1) By similar arguments as in Remark \ref{rem3.4} we can view $\Phi^N$ as an $\cS'(\IR^d)$-valued random variable. The same proof as in Corollary \ref{cor3.5} yields that
\begin{equation}\label{4.13}
\mbox{$\Phi^N$ converges in law to $P^G$, as $N \r \infty$}.
\end{equation}

\medskip\n
2) When $u_N \,N^{d+2}/b_N \r \infty$, we can divide both members of (\ref{4.3}) by $\sqrt{2d \,u_N} \; N^{\frac{d}{2} + 1}$. By Lemma \ref{lem4.1} we then see that for each continuous compactly supported function $V$ on $\IR^d$, both $\langle \wh{\cL}^N,V\rangle$ and $\langle \Phi^N,V\rangle$ converge in distribution to a centered Gaussian variable with variance $\langle V, GV\rangle$ (once again, note that $u_N$ enters the definition of $\wh{\cL}^N$ but not that of $\Phi^N$). One thus recovers the convergence asserted in Theorem \ref{theo3.3} under the assumption
\begin{equation}\label{4.14}
\left\{ \begin{array}{ll}
u_N \, N \r \infty, & \mbox{when $d=3$},
\\[1ex]
u_N \, \mbox{\f $\dis\frac{N^2}{\log N}$} \r \infty, & \mbox{when $d=4$},
\\[2ex]
u_N \,N^2 \r \infty, & \mbox{when $d \ge 5$}.
\end{array}\right.
\end{equation}

\n
This recovers the full range of validity of Theorem \ref{theo3.3} (i.e.~$u_N \,N^{d-2} \r \infty$), when $d=3$, but only part of the range when $d \ge 4$. Observe that when $d \ge 4$, in the high intensity regime $u_N \,N^{d-2} \r \infty$, $\langle \wh{\cL}^N,V\rangle$ and $\langle \Phi^N,V\rangle$ have the same distributional limit, regardless of whether (\ref{4.14}) breaks down or not.

\bigskip\n
3) One may wonder from the above discussion, whether some cancellations may be extracted from (\ref{4.3}). In the high intensity regime $u_N \,N^{d-2} \r \infty$, given $M \ge 1$, can one for each $N$ couple two Gaussian free fields $(\varphi_x)_{x \in \IZ^d}$, $(\psi_x)_{x \in \IZ^d}$, with $(L_{x,u_N})_{x \in \IZ^d}$, so that
\begin{align*}
&\mbox{$(\varphi_x)_{x \in \IZ^d}$ is independent of $(L_{x,u_N})_{x \in \IZ^d}$},
\\[1ex]
&\fr \;\varphi^2_x + L_{x,u_N} = \fr \big(\psi_x + \sqrt{2u_N}\big)^2, \;\mbox{a.s., for $x \in NC_M$ (see (\ref{3.2}) for notation)},
\intertext{and for any continuous function $V$ supported in $C_M$,}
&\fr \;\dsl_{x \in \IZ^d} V\Big(\mbox{\f $\dis\frac{x}{N}$}\Big) (\varphi^2_x - \psi^2_x) \big/ \big(\sqrt{u_N} \; N^{\frac{d}{2} +1}\big) \underset{N}{\longrightarrow} 0 \;\mbox{in probability?}
\end{align*}

\n
Such a coupling would yield a simple way to recover the identity of the distributional limits of $\langle \wh{\cL}^N,V\rangle$ and $\langle \Phi^N,V\rangle$ in the high intensity regime. Of course, when (\ref{4.14}) holds such a coupling can be achieved (by (\ref{4.7}) the last  condition is automatically satisfied). \hfill $\square$
\end{remark}

\section{The special case of dimension 3}
\setcounter{equation}{0}

We have seen in the last section that when $d=3$ the isomorphism theorem (\ref{0.6}) offers a quick route to the study of the asymptotic behaviour of $\wh{\cL}^N$ in the full range of the high intensity regime (\ref{0.4}).  In this section we investigate what happens ``at the edge'', in the constant intensity regime (\ref{0.2}). The scaling limit of the distributional identity (\ref{0.6}) (with proper counter terms) will bring as a by-product an isomorphism theorem for $3$-dimensional Brownian interlacements, see Theorem \ref{theo5.1} and Corollary \ref{cor5.3}. This last corollary has a similar flavor to the distributional identity derived by Le Jan in the context of a Poisson gas of Brownian loops at half-integer intensity, see Chapter 10 \S2, p.~104 of \cite{Leja12}. We recall the notation of the end of Section 1 concerning Gaussian free fields. Throughout this section $d=3$.

\medskip
We denote by $H$ the Gaussian space, which is the $L^2(P^G)$-closure of $\big\{\big\langle\Phi,f\big\rangle; \,f \in \cS(\IR^d)\big\}$. For $Y,Z$ in $H$ the Wick product, see \cite{Jans97}, p.~23, 24, $\mbox{{\large :}} YZ\mbox{{\large :}}$ equals $YZ - E^{P^G}[YZ]$, and one has the identity, see \cite{Jans97}, p.~11, 12 or \cite{MarcRose06}, p.~201:
\begin{equation}\label{5.1}
\begin{split}
E^{P^G}[\mbox{{\large :}} Y^2 \mbox{{\large :}} \;\;\mbox{{\large :}} Z^2\mbox{{\large :}}] & = 2 E^P[YZ]^2, \;\;\mbox{for $Y,Z \in H$}
\\ 
& = 2 \big\langle f,G h\big\rangle^2, \;\mbox{when $Y = \big\langle \Phi,f\big\rangle, \;Z= \big\langle\Phi, h\big\rangle$, with $f,h \in \cS(\IR^d)$}.
\end{split}
\end{equation}

\n
When $d = 3$, the Green function $G(y,y')$ is locally square integrable, and for bounded measurable functions $V$ on $\IR^3$ vanishing outside a compact set, one can define
\begin{equation}\label{5.2}
\dis\int V(y)\, \mbox{{\large :}}  \Phi^2_y \mbox{{\large :}} \,dy = \lim\limits_{\ve \r 0} \;\dis\int V(y) \,\mbox{{\large :}} \Phi^2_{y,\ve} \mbox{{\large :}} \,dy \;\;\mbox{in} \; L^2(P^G),
\end{equation}
where $\Phi_{y,\ve} = \big\langle \Phi, \rho_{y,\ve}\big\rangle$, with $\rho_{y,\ve}(\cdot) = \frac{1}{\ve^3} \,\rho(\frac{\cdot - y}{\ve})$, and $\rho(\cdot)$ a non-negative, smooth, compactly supported function on $\IR^3$ such that $\int \rho(z)\,dz = 1$. This fact uses (\ref{5.1}), the crucial local square integrability of $G(\cdot)$, and similar arguments as in the proof of Proposition 8.5.1, p.~153 of \cite{GlimJaff81}, see also \cite{Leja12}, p.~101. In addition, the limit object in the left-hand side of (\ref{5.1}) does not depend on the specific choice of $\rho(\cdot)$. For convenience, we will assume that $\rho(\cdot)$ is radially symmetric and supported in $B(0,1)$.

\medskip
For $V$ as above, one also defines (in a simpler fashion) the element of $H$:
\begin{equation}\label{5.3}
\dis\int V(y) \,\Phi_y \,dy = \lim\limits_{\ve \r 0} \dis\int V(y) \Phi_{y,\ve} \,dy \; \mbox{in} \;L^2(P^G).
\end{equation}

\n
We now assume that we are in the constant intensity regime (\ref{0.2}), and for $V$ a continuous compactly supported function on $\IR^3$, we rewrite (\ref{4.2}) as the following identity in distribution under $P^g \otimes \ov{\IP}$:
\begin{equation}\label{5.4}
\fr \; \dsl_{x \in \IZ^3} V\Big(\mbox{\f $\dis\frac{x}{N}$}\Big) \,\mbox{{\large :}} \varphi_x^2 \mbox{{\large :}} + dN^2 \big\langle \cL^N,V\big\rangle \stackrel{\rm law}{=} \fr \dsl_{x \in \IZ^3} V\Big(\mbox{\f $\dis\frac{x}{N}$}\Big)\, \mbox{{\large :}} \big(\varphi_x + \sqrt{2 u_N}\big)^2\mbox{{\large :}} \,,
\end{equation} 

\n
where $\mbox{{\large :}}(\varphi_x + \sqrt{2u_N})^2 \mbox{{\large :}} \; = \mbox{{\large :}} \varphi^2_x \mbox{{\large :}} \; + 2 \sqrt{2u_N} \;\varphi_x + 2u_N$ and $\mbox{{\large :}} \varphi^2_x\mbox{{\large :}}  = \varphi^2_x - g(0)$.

\medskip
We already know that $\langle \cL^N,V\rangle$ converges in distribution to $\langle \cL_\alpha,V\rangle$ (see Theorem \ref{theo3.2}). The limit behavior of the other terms in (\ref{5.4}) is described by

\bigskip
\begin{theorem}\label{theo5.1} ($d=3$, $V$ continuous with compact support)

\medskip
For $\alpha \ge 0$ and $u_N = 3 \alpha/N$ ($= d \alpha/N^{d-2}$), as $N \r \infty$,
\begin{equation}\label{5.5}
\mbox{\f $\dis\frac{1}{3N^2}$} \;\dsl_{x \in \IZ^3} \;V\Big(\mbox{\f $\dis\frac{x}{N}$}\Big)\;\mbox{{\rm \large :}}\big(\varphi_x + \sqrt{2u_N}\big)^2\mbox{{\rm \large :}} \; \mbox{converges in law to} \; \dis\int V(y)\, \mbox{{\rm \large :}} (\Phi_y + \sqrt{2 \alpha})^2\mbox{{\rm \large :}}\, dy
\end{equation} 

\n
(the last term can be defined as $\int V(y)$ $\mbox{{\rm \large :}}\Phi^2_y\mbox{{\rm \large :}}\,dy + 2 \sqrt{\alpha} \int V(y) \, \Phi_y \,dy + 2 \alpha \int V(y)\,dy$, but see also (\ref{5.10}) below).

\medskip
Under $P^G \otimes \IP$ one has the distributional identity
\begin{equation}\label{5.6}
\fr \dis\int V(y) \,\mbox{{\rm \large :}} \Phi^2_y \mbox{{\rm \large :}} \, dy + \big\langle \cL_\alpha,V \big\rangle \stackrel{\rm law}{=} \fr \dis\int V(y) \,\mbox{{\rm \large :}}\big(\Phi_y + \sqrt{2\alpha}\big)^2\mbox{{\rm \large :}} \, dy.
\end{equation}
\end{theorem}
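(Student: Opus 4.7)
The plan is to obtain the distributional identity (5.6) as the scaling limit of the discrete isomorphism (5.4), with the convergence statement (5.5) as the main technical input. I would prove (5.5) first and then pass to the limit in (5.4).

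For (5.5), expand the Wick squares via $:(\varphi_x+\sqrt{2u_N})^2:\,=\,:\varphi_x^2:\,+2\sqrt{2u_N}\,\varphi_x+2u_N$ and its continuum counterpart. With $u_N=3\alpha/N$, the constant piece $\frac{2u_N}{3N^2}\sum_x V(x/N)$ tends to $2\alpha\int V\,dy$ by Riemann sum convergence; the linear piece $\frac{2\sqrt{2u_N}}{3N^2}\sum_x V(x/N)\varphi_x$ simplifies to $2\sqrt{2\alpha}\,\langle \Phi^N,V\rangle$ by elementary algebra using (4.1), and hence tends in law to $2\sqrt{2\alpha}\int V(y)\Phi_y\,dy$ by (4.13).

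The substantive step is the convergence in law of $Y_N:=\frac{1}{3N^2}\sum_x V(x/N):\varphi_x^2:$ to $Y:=\int V(y):\Phi_y^2:\,dy$. Both variables live in the second Wiener chaos of their ambient Gaussian spaces, so I would compute moment generating functions explicitly. The standard determinantal formula for Gaussian quadratic forms, combined with the Wick subtraction, yields, for $|z|$ in a neighborhood of the origin,
\[
E^{P^g}\big[e^{zY_N}\big]=\exp\Big(\dsl_{n\ge 2}\mbox{\f $\dis\frac{(2z)^n}{2n}$}\,T_n(g_N)\Big), \qquad T_n(g_N):=\mbox{\f $\dis\frac{1}{N^{3n}}$}\dsl_{y_1,\dots,y_n\in\IL_N}\pr_{i=1}^{n} V(y_i)\,g_N(y_i-y_{i+1})
\]
(cyclic indices $y_{n+1}=y_1$), where the $n=1$ term cancels precisely because of the Wick subtraction. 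The analogous formula with $T_n(G):=\int\prod_{i=1}^n V(y_i)\,G(y_i-y_{i+1})\,dy_1\cdots dy_n$ holds for $E^{P^G}[e^{zY}]$, established by mollifying $\Phi$ as in (5.2) and passing to the limit in the mollifier width. The proof of (5.5) thus reduces to verifying $T_n(g_N)\to T_n(G)$ for every $n\ge 2$: the case $n=2$ is (4.10); for $n\ge 3$ one splits the sum and the integral into a near-diagonal region $\{\min_i|y_i-y_{i+1}|\le\gamma\}$ and its complement, handles the complement via (1.7) and a Riemann sum argument as in Proposition 3.1, and controls the near-diagonal contribution uniformly in $N$ by the iterated local integrability of products of the kernel $|\cdot|^{-1}$ on $\IR^3$. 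Convergence of the real-analytic MGFs on a neighborhood of the origin then forces convergence in law by the argument after (3.13).

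Given (5.5), I divide (5.4) by $3N^2$ and take $N\to\infty$. The right-hand side converges in law to $\fr\int V(y):(\Phi_y+\sqrt{2\alpha})^2:\,dy$ by (5.5). On the left-hand side, the $\alpha=0$ case of (5.5) gives $\frac{1}{6N^2}\sum_x V(x/N):\varphi_x^2:\to \fr\int V:\Phi^2:\,dy$ in law, while Theorem 3.2 gives $\langle\cL^N,V\rangle\to\langle\cL_\alpha,V\rangle$; the independence of $\varphi$ and $\omega$ under $P^g\otimes\ov{\IP}$ lifts these two marginal convergences to joint convergence with an independent limit. Equality in law at each $N$ therefore passes to the limit, yielding (5.6) tested against any continuous compactly supported $V$. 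Applying the same reasoning to finite linear combinations of test functions in $\cS(\IR^3)$ and invoking L\'evy's continuity theorem on $\cS'(\IR^3)$ (as in Corollary 3.5) upgrades this to the stated identity in law of $\cS'(\IR^3)$-valued random variables. The principal obstacle is the Riemann-sum convergence $T_n(g_N)\to T_n(G)$ near the diagonal, which is precisely where three-dimensionality is indispensable: only for $d=3$ is $G^2$ locally integrable, and this is the property that makes $:\Phi^2:$ well defined in the continuum (see (5.2)).
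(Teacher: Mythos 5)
Your route to (\ref{5.5}) differs genuinely from the paper's. The paper works through an intermediate mollified field $\Phi_{y,\varepsilon(N)}$ with a carefully tuned $\varepsilon(N)$ satisfying (\ref{5.9}), proves $L^2(P^G)$-convergence to the continuum object via (\ref{5.10})--(\ref{5.11}), and then compares the discrete and mollified MGFs through the regularized determinants in (\ref{5.31})--(\ref{5.32}), controlling their difference by the Hilbert--Schmidt perturbation bound (\ref{5.37}) together with the kernel estimates (\ref{5.35})--(\ref{5.36}). You instead expand both MGFs into cyclic trace cumulants and verify termwise convergence $T_n(g_N)\to T_n(G)$. This is a sound and arguably more transparent strategy: the absolute convergence of the cumulant series uniformly in $N$ follows from (\ref{3.3}), and your near-diagonal/off-diagonal splitting exploits exactly the $d=3$ local square-integrability of $G$ that the paper also relies on. What the paper's $\det_2$ route buys is that one never needs to examine cumulants of order $n\ge 3$ individually; what your route buys is a direct comparison between discrete and continuum that bypasses the auxiliary $\varepsilon(N)$ tuning.

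There is, however, a genuine gap when $\alpha>0$. You expand $\mbox{{\large :}}(\varphi_x+\sqrt{2u_N})^2\mbox{{\large :}}$ into quadratic, linear and constant pieces and establish convergence in law of the quadratic piece $Y_N$ and, separately, of the linear piece $2\sqrt{2\alpha}\,\langle\Phi^N,V\rangle$, each to its continuum counterpart. But these two pieces are both functionals of the same Gaussian field $(\varphi_x)$ and are not independent, so their marginal convergences in law do not by themselves give convergence in law of their sum; one needs joint convergence. The fix stays within your framework: apply the Laplace-transform argument not to $Y_N$ alone but to the full shifted quadratic form $\frac{1}{3N^2}\sum_x V(x/N)\,\mbox{{\large :}}(\varphi_x+\sqrt{2u_N})^2\mbox{{\large :}}$, whose MGF still has a closed form (this is exactly what the exponential prefactor $e^{\alpha z\langle V,(I-zG_NV)^{-1}1\rangle}$ in (\ref{5.32}) encodes), and show termwise convergence of the corresponding extra series $\sum_m (2z)^m\langle V,(G_NV)^m 1\rangle_{\IL_N}\to\sum_m (2z)^m\langle V,(GV)^m1\rangle$, which is essentially Proposition \ref{prop3.1} again. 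With that amendment the deduction of (\ref{5.6}) from (\ref{5.5}), (\ref{5.4}) and Theorem \ref{theo3.2}, together with independence of $\varphi$ and $\omega$ under $P^g\otimes\overline{\IP}$, is exactly as you describe and matches the paper.
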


\begin{proof}
We first observe that (\ref{5.5}), (\ref{5.4}) (choosing $\alpha = 0$ to handle the first term of (\ref{5.4})) and Theorem \ref{theo3.2} readily imply (\ref{5.6}). Thus, we only need to prove (\ref{5.5}). This is a case of lattice field approximation (see chapter 9 \S 5 and \S 6 of \cite{GlimJaff81}, also Chapter 8 of \cite{Simo74}). However, the fact that we consider the $d=3$ situation and massless free fields, makes the set-up a bit different. Since some care is needed, see for instance \cite{GlimJaff81}, p.~185-187, we sketch the proof for the reader's convenience. We define
\begin{equation}\label{5.7}
\varphi_{y,N} = \sqrt{\mbox{\f $\dis\frac{N}{d}$}} \;\varphi_{Ny} \;\; \mbox{(with $d=3$), for $y \in \IL_N$},
\end{equation}
and note that the first  term of (\ref{5.5}) equals
\begin{equation}\label{5.8}
\mbox{\f $\dis\frac{1}{N^3}$} \;\dsl_{y \in \IL_N} V(y) \,\mbox{{\rm \large :}}\big(\varphi_{y,N} + \sqrt{2 \alpha}\big)^2\mbox{{\rm \large :}}\,.
\end{equation}
We now choose $\ve(N) > 0$ tending to $0$ not too fast so that
\begin{equation}\label{5.9}
\ve(N) \r 0 \; \mbox{and} \; N \,\ve^3(N) \r \infty, \;\; \mbox{as $N \r \infty$}.
\end{equation}
As we explain below, the claim (\ref{5.5}) will follow once we establish the following three facts:
\begin{align}
&\lim\limits_N \,\Big\| \dis\int V(y)\, \mbox{{\rm \large :}} \big(\Phi_y + \sqrt{2 \alpha}\big)^2 \mbox{{\rm \large :}}\, dy - \dis\int V(y) \,\mbox{{\rm \large :}} \big(\Phi_{y,\ve} + \sqrt{2 \alpha}\big)^2 \mbox{{\rm \large :}}\,dy\Big\|_{L^2(P^G)} = 0, \label{5.10}
\\[1ex]
&\lim\limits_N \,\Big\| \dis\int V(y)\, \mbox{{\rm \large :}} \big(\Phi_{y,\ve} + \sqrt{2 \alpha}\big)^2 \mbox{{\rm \large :}}\, dy - \mbox{\f $\dis\frac{1}{N^3}$} \; \dsl_{y \in \IL_N} V(y) \,\mbox{{\rm \large :}} \big(\Phi_{y,\ve} + \sqrt{2 \alpha}\big)^2 \mbox{{\rm \large :}}\,dy\Big\|_{L^1(P^G)} = 0, \label{5.11}
\end{align}
and for small real $z$, i.e.~$|z| \le r_0$,
\begin{equation}\label{5.12}
\begin{split}
\lim\limits_N &\;\Big(E^{P^G}\Big[\exp\Big\{\mbox{\f $\dis\frac{z}{N^3}$} \; \dsl_{y \in \IL_N} V(y) \, \mbox{{\rm \large :}} \big(\Phi_{y,\ve} + \sqrt{2 \alpha}\big)^2\, \mbox{{\rm \large :}}\Big\}\Big]  
\\
&-\; E^{P^g} \,\Big[\exp\Big\{\mbox{\f $\dis\frac{z}{N^3}$} \; \dsl_{y \in \IL_N} V(y) \, \mbox{{\rm \large :}} \big(\varphi_{y,N} + \sqrt{2 \alpha}\big)^2\mbox{{\rm \large :}}\Big\}\Big]\Big) = 0
\end{split}
\end{equation}

\n
We first explain how these three facts yield a proof of (\ref{5.5}). By Theorem 3.50, p.~39 of \cite{Jans97}, we can replace $L^1(P^G)$ by $L^2(P^G)$ in (\ref{5.11}). Together with (\ref{5.10}) we find that
\begin{equation}\label{5.13}
\lim\limits_N \;\Big\|\mbox{\f $\dis\frac{1}{N^3}$} \;\dsl_{y \in \IL_N} V(y) \,\mbox{{\rm \large :}}\big(\Phi_{y,\ve} + \sqrt{2 \alpha}\big)^2\mbox{{\rm \large :}} - \dis\int V(y) \,\mbox{{\rm \large :}}\big(\Phi_y + \sqrt{2 \alpha}\big)^2 \mbox{{\rm \large :}}\,dy\Big\|_{L^2(P^G)} = 0.
\end{equation}
By Theorem 6.7, p.~82 of \cite{Jans97}, we thus see that for some $r_1 > 0$,
\begin{equation}\label{5.14}
\sup\limits_{N \ge 1} \;E^{P^G} \Big[\cosh \Big(\mbox{\f $\dis\frac{r_1}{N^3}$} \;\dsl_{y \in \IL_N} V(y)\,\mbox{{\rm \large :}} \big(\Phi_{y,\ve} + \sqrt{2 \alpha}\big)^2\mbox{{\rm \large :}}\Big)\Big] < \infty.
\end{equation}
By (\ref{5.12}), we then find that for some $0 < r_2 < r_1 \wedge r_0$,
\begin{equation}\label{5.15}
\sup\limits_{N \ge 1} \;E^{P^g} \Big[\cosh \Big(\mbox{\f $\dis\frac{r_2}{N^3}$} \;\dsl_{y \in \IL_N} V(y)\,\mbox{{\rm \large :}} \big(\varphi_{y,N} + \sqrt{2 \alpha}\big)^2\mbox{{\rm \large :}}\Big)\Big] < \infty.
\end{equation}

\n
Thus, by a similar argument as below (\ref{3.12}), we see that the laws under $P^g$ of the random variables $\frac{1}{N^3} \sum_{y \in \IL_N} V(y)\, \mbox{{\rm \large :}} (\varphi_{y,N} + \sqrt{2 \alpha})^2\mbox{{\rm \large :}}$ are tight, and, when along a subsequence $N_k$ they converge in distribution to a random variable $U$, then, for $|z| < r_2$, $e^{zU}$ is integrable and one has
\begin{equation}\label{5.16}
\begin{split}
E[e^{zU}] & = \lim\limits_{N_k} \;E^{P^g} \Big[\exp\Big\{ \mbox{\f $\dis\frac{z}{N^3}$} \;\dsl_{y \in \IL_N} V(y) \,\mbox{{\rm \large :}} \big(\varphi_{y,N} + \sqrt{2 \alpha}\big)^2\mbox{{\rm \large :}}\Big\}\Big] 
\\ 
&\!\!\! \stackrel{(\ref{5.12})}{=}  \lim\limits_{N_k} \;E^{P^G} \Big[\exp\Big\{ \mbox{\f $\dis\frac{z}{N^3}$} \;\dsl_{y \in \IL_N} V(y) \,\mbox{{\rm \large :}} \big(\Phi_{y,\ve} + \sqrt{2 \alpha}\big)^2\mbox{{\rm \large :}}\Big\}\Big] 
\\
& = E^{P^G} \Big[\exp \Big\{ z \dis\int V(y) \,\mbox{{\rm \large :}} \big(\Phi_y + \sqrt{2 \alpha}\big)^2\mbox{{\rm \large :}}\,dy\Big\}\Big] ,
\end{split}
\end{equation}

\n
using in the last step both the uniform integrability of the variables under the expectation in the second line when $|z| < r_2$ by (\ref{5.14}), and (\ref{5.13}) (see Theorem 5.4, p.~32 of \cite{Bill68}). By analyticity, the equality between the first and the last term of (\ref{5.16}) extends to the strip $\{z \in \IC$; $|Rez| < r_2\}$. Hence, the random variable $U$ has same distribution as $\int V(y)\,\mbox{{\rm \large :}}(\Phi_y + \sqrt{2 \alpha})^2\mbox{{\rm \large :}}\,dy$, regardless of the extracted subsequence $N_k$, and (\ref{5.5}) follows.

\medskip
There remains to establish (\ref{5.10}) - (\ref{5.12}). The claim (\ref{5.10}) is an immediate consequence of (\ref{5.2}), (\ref{5.3}) and the identity $\mbox{{\rm \large :}}(\Phi_{y,\ve} + \sqrt{2 \alpha})^2\mbox{{\rm \large :}}\, = \,\mbox{{\rm \large :}}\Phi^2_{y,\ve}\, \mbox{{\rm \large :}} + 2 \sqrt{2\alpha} \;\Phi_{y,\ve} + 2 \alpha$.

\medskip
We then turn to (\ref{5.11}) and simply explain why
\begin{equation}\label{5.17}
\lim\limits_N \;\Big\| \dis\int V(y) \,\mbox{{\large :}}  \Phi^2_{y,\ve}\mbox{{\large :}} \,dy - \mbox{\f $\dis\frac{1}{N^3}$} \;\dsl_{y \in \IL_N} V(y)\,\mbox{{\large :}} \Phi^2_{y,\ve}\mbox{{\large :}}\, \Big\|_{L^1(P^G)} = 0.
\end{equation}

\n
The case of the linear term (where $2 \sqrt{2 \alpha} \;\Phi_{y,\ve}$ replaces $\mbox{{\large :}} \Phi^2_{y,\ve}\mbox{{\large :}}$) is simpler to handle, and the constant terms (after developing the square in (\ref{5.11})) cancel each other. The quantity under the limit in (\ref{5.17}) is bounded above by $I_1 + I_2$, where
\begin{align}
I_1  = &\; E^{P^G} \Big[\Big(\dis\int h_N(y)\,\mbox{{\large :}} \Phi^2_{y,\ve}\mbox{{\large :}}\,dy\Big)^2\Big]^{\frac{1}{2}}, \;\mbox{with} \label{5.18}
\\
& \; h_N(y) = \dsl_{z \in \IL_N} \big(V(y) - V(z)\big) \,1\Big\{y \in z + \mbox{\f $\dis\frac{1}{N}$} \;[0,1)^3\Big\}, \nonumber
\\[2ex]
I_2 = & \dsl_{z \in \IL_N} |V(z)| \,E^{P^G} \Big[\Big| \dis\int_{z + \frac{1}{N} [0,1)^3} \mbox{{\large :}}\Phi^2_{y,\ve} \mbox{{\large :}} \, - \, \mbox{{\large :}}\Phi^2_{z,\ve}\mbox{{\large :}}\,dy\Big|\Big]. \label{5.19}
\end{align}
By (\ref{5.1}) we see that
\begin{align}
& I^2_1 \le 2 \dis\int |h_N(y)| \;|h_N(y)| \,G^2_\ve(y-y')\,dy\,dy', \;\mbox{where} \label{5.20}
\\ 
&G_\ve(y) = E^{P^G}[\Phi_{y,\ve} \,\Phi_{0,\ve}] = \dis\int \rho_\ve(z-y)\,G(z-z')\,\rho_\ve(z')\,dz\,dz', \;\mbox{for $y \in \IR^3$}. \label{5.21}
\end{align}
Note that $\rho_\ve(\cdot)$ is spherically symmetric with support in $B(0,\ve)$ and $G$ harmonic on $\IR^3 \backslash \{0\}$, so that
\begin{equation}\label{5.22}
\begin{split}
G_\ve(y)  = &\; G(y), \qquad \mbox{when $|y| > 2 \ve$}
\\[1ex]
\le & \;\mbox{\f $\dis\frac{c}{\ve}$} \;\|\rho\|_\infty, \quad \mbox{when $|y| \le 2 \ve$}, 
\end{split}
\end{equation}

\n
where in the last step we used that $\int_{|z-z'|\le r}G(z-z') \,dz = c\,r^2$, for $r \ge 0$, $z' \in \IR^3$.

\medskip
We introduce a box $\Lambda = [-M,M)^3$, with $M \ge 1$ integer, such that
\begin{equation}\label{5.23}
\mbox{$\Lambda$ contains all points within $|\cdot |_\infty$-distance $1$ from the support of $V$.}
\end{equation}

\n
Note that $h_N$ vanishes outside $\Lambda$ (cf.~(\ref{5.18})), and converges uniformly to $0$ as $N \r \infty$. Moreover, by (\ref{5.22}) and the local square integrability of $G^2$, we see that $\int_{\Lambda \times \Lambda} G^2_\ve(y-y')\,dy \,dy \r \int_{\Lambda \times \Lambda} G^2(y-y') \,dy\,dy'$, as $N \r \infty$. Hence, see (\ref{5.20}),
\begin{equation}\label{5.24}
I_1 \r 0, \;\; \mbox{as $N \r \infty$}.
\end{equation}

\n
We then control $I_2$. We first note that $N^3 | \Lambda |$ bounds the number of boxes $z + \frac{1}{N} [0,1)^3$ intersecting the support of $V$ (with $|\Lambda |$ the volume of $\Lambda$, and $ z \in \IL_N$). Hence,
\begin{equation}\label{5.25}
I_2 \le \|V\|_\infty \,|\Lambda | \;\sup\limits_{y \in \frac{1}{N}[0,1)^3} E^{P^G} [ | \mbox{{\large :}} \Phi^2_{y,\ve} \mbox{{\large :}} \, - \, \mbox{{\large :}}\Phi^2_{0,\ve}\mbox{{\large :}}|].
\end{equation}
Moreover, for $y \in \IR^3$ we have
\begin{equation}\label{5.26}
\begin{split}
E^{P^G} [(\mbox{{\large :}} \Phi^2_{y,\ve} \mbox{{\large :}} \, - \, \mbox{{\large :}}\Phi^2_{0,\ve}\mbox{{\large :}})^2] & = E^{P^G} [\mbox{{\large :}} \Phi^2_{y,\ve} \mbox{{\large :}} ] + E^{P^G} [\mbox{{\large :}} \Phi^2_{0,\ve} \mbox{{\large :}} ] - 2 E^{P^G} [\mbox{{\large :}} \Phi^2_{y,\ve} \mbox{{\large :}} \;\mbox{{\large :}} \Phi^2_{0,\ve} \mbox{{\large :}} ]
\\[1ex]
& \!\! \stackrel{(\ref{5.1})}{=} 4\big(G^2_\ve(0) - G^2_\ve(y)\big) \stackrel{(\ref{5.22})}{\le} \mbox{\f $\dis\frac{c (\rho)}{\ve}$}  \big(G_\ve(0) - G_\ve(y)\big)
\\[1ex]
& \le \mbox{\f $\dis\frac{c (\rho)}{\ve}$} \; \mbox{\f $\dis\frac{|y|}{\ve^2}$}  \;\; \mbox{(since $\| \,|\nabla G_\ve| \,\|_{L^\infty(\IR^3)} \le \mbox{\f $\dis\frac{c (\rho)}{\ve^2}$} \big)$}.
\end{split}
\end{equation}
We thus see that
\begin{equation}\label{5.27}
I_2 \le c(\rho) \,\|V\|_\infty |\Lambda| \;(N \ve^3)^{-\frac{1}{2}} \underset{N}{\longrightarrow} 0 \;\; \mbox{(by (\ref{5.9}))}.
\end{equation}
This complete the proof of (\ref{5.11}). We now turn to (\ref{5.12}).

\medskip
By (\ref{5.22}) and (\ref{1.2}), (\ref{1.3}), (\ref{1.6}) we have for $y,y' \in \IL_N, N \ge 1$,
\begin{equation}\label{5.28}
\begin{array}{l}
G_\ve(y-y') \le c(\rho) \Big(|y - y'| \vee \mbox{\f $\dis\frac{1}{N}$}\Big)^{-1} \; \mbox{and}
\\[2ex]
g_N(y,y') \le c\Big(|y-y'| \vee \mbox{\f $\dis\frac{1}{N}$}\Big)^{-1}.
\end{array}
\end{equation}
We can thus make sure that for small real $z$,
\begin{align}
& \sup\limits_{N,y \in \IL_N} \;\mbox{\f $\dis\frac{|z|}{N^3}$} \; \dsl_{y \in \IL_N} G_\ve(y-y') \,|V(y')| \le \fr, \;\mbox{and} \label{5.29}
\\[1ex]
&\sup\limits_{N,y \in \IL_N} \;\mbox{\f $\dis\frac{|z|}{N^3}$} \; \dsl_{y \in \IL_N} g_N(y-y') \,|V(y')| \le \fr \label{5.30}
\end{align}

\n
(incidentally, by (\ref{3.6}) and the continuity of $G\,|V|$, we also have $|z| \, \|G\,|V|\|_{L^\infty(\IR^3)} \le \frac{1}{2}$).

\pagebreak
If $\Lambda_N = \Lambda \cap \IL_N$, the matrix $\big( G_\ve(y-y'))_{y,y' \in \Lambda_N}$ is positive definite. Indeed, by (\ref{5.21}) it suffices to note that $\rho_\ve(\cdot-y), y \in \Lambda_N$, are linearly independent (one easily sees by taking Fourier transforms and using an analytic continuation argument that any linear relation between these functions is trivial).

By Lemma 5.2.1 on p.~198 of \cite{MarcRose06}, or Proposition 2.14 on p.~47 of \cite{Szni11d}, for $z$ as above, 
\begin{equation}\label{5.31}
E^{P^G} \Big[\exp\Big\{ \mbox{\f $\dis\frac{z}{2N^3}$} \; \dsl_{y \in \IL_N} V(y)\, \mbox{{\large :}} \big(\Phi_{y,\ve} + \sqrt{2\alpha}\big)^2\mbox{{\large :}}\Big\}\Big] = \dis\frac{e^{\alpha z\langle V,(I-z \wt{G}_\ve V)^{-1} 1\rangle_{\IL_N}}}{({\rm det}_2((I-z \wt{G}_\ve V)_{|\Lambda_N \times \Lambda_N}))^{\frac{1}{2}}} \;,
\end{equation}

\n
where ${\rm det}_2 \big((I-z \wt{G}_\ve V)_{|\Lambda_N \times \Lambda_N}\big)$ denotes the regularized determinant of the finite matrix $K$, which is the restriction to $\Lambda_N \times \Lambda_N$ of $\big(\delta_{y,y'} - z G_\ve(y-y') \frac{V(y')}{N^3}\big)_{y,y' \in \IL_N}$, that is $({\rm det} \,K)e^{-Tr(K-I)}$ (where $Tr$ stands for the trace), see \cite{Simo79}, p.~75. In addition, with some abuse of notation, in the term in the exponential in the right-hand side of (\ref{5.31}), $\wt{G}_\ve$ stands for the linear operator on functions on $\IL_N$ defined by a similar formulas as (\ref{1.8}), with $g_N(y,y')$ replaced by $G_\ve(y-y')$, so that $\wt{G}_\ve V$ operates on bounded functions on $\IL_N$ and $I-z \wt{G}_\ve V$ is invertible by (\ref{5.29}). Also, compared to the above references, we used the identity following from the Neumann expansion of $(I-z \wt{G}_\ve V)^{-1}$:
\begin{equation*}
\big\langle V,(I-z \wt{G}_\ve V)^{-1} 1\big\rangle_{\IL_N} = \big\langle V,1\big\rangle_{\IL_N} + z \big\langle V, (I-z \wt{G}_\ve V)^{-1} \wt{G}_\ve V \,1\big\rangle_{\IL_N} .
\end{equation*}

\n
In a similar manner, with analogous notation, for $z$ as above, we have
\begin{equation}\label{5.32}
E^{P^g} \Big[\exp\Big\{ \mbox{\f $\dis\frac{z}{2N^3}$} \; \dsl_{y \in \IL_N} V(y)\, \mbox{{\large :}} \big(\Phi_{y,N} + \sqrt{2\alpha}\big)^2\mbox{{\large :}}\Big\}\Big] = \dis\frac{e^{\alpha z\langle V,(I-z G_N V)^{-1} 1\rangle_{\IL_N}}}{({\rm det}_2((I-z G_N V)_{|\Lambda_N \times \Lambda_N}))^{\frac{1}{2}}} \;,
\end{equation}

\n
By Proposition \ref{prop3.1} and (\ref{5.30}) we have for $z$ as above
\begin{equation}\label{5.33}
\lim\limits_N \;\big\langle V,(I-z G_N V)^{-1} 1\big\rangle_{\IL_N} = \big\langle V,(I-z GV)^{-1} 1\big\rangle .
\end{equation}

\n
By an analogous argument as for the proof of Proposition \ref{prop3.1} and (\ref{5.29}) we have
\begin{equation}\label{5.34}
\lim\limits_N \;\big\langle V,(I-z \wt{G}_\ve V)^{-1} 1\big\rangle_{\IL_N} = \big\langle V,(I-z GV)^{-1}  1\big\rangle .
\end{equation}

\n
As we now explain, the claim (\ref{5.12}) will now follow once we prove that
\begin{align}
\sup\limits_N & \dsl_{y,y' \in \Lambda_N} \big(G_\ve(y,y')^2 + g_N(y,y')^2\big) \;\mbox{\f $\dis\frac{1}{N^6}$} < \infty, \label{5.35}
\\[1ex]
\lim\limits_N & \dsl_{y,y' \in \Lambda_N} \big(G_\ve(y,y') - g_N(y,y')\big)^2 \;\mbox{\f $\dis\frac{1}{N^6}$} = 0. \label{5.36}
\end{align}

\n
Indeed from Theorem 9.2, on p.~75 of \cite{Simo05}, one knows that for any $n \times n$ matrices $A,B$ (and $\| \cdot \|_2$ denotes the Hilbert-Schmidt norm)
\begin{equation}\label{5.37}
|{\rm det}_2 (I+A) - {\rm det}_2(I+B)| \le \|A - B\|_2 \exp\{c(\|A\|_2 + \|B\|_2 + 1)^2\}
\end{equation}
(the constant $c$ does not depend on $n$).

\medskip
Assuming (\ref{5.35}), (\ref{5.36}) one thus finds that for small $z$
\begin{align}
&\lim\limits_N \;{\rm det}_2\big((I-z \wt{G}_\ve V)_{|\Lambda_N \times \Lambda_N}\big) - {\rm det}_2 \big((I-z G_N V)_{|\Lambda_N \times \Lambda_N}\big) = 0, \; \mbox{and} \label{5.38}
\\[1ex]
&\mbox{the regularized determinants in (\ref{5.38}) each belong to $[\frac{1}{2}, \frac{3}{2}]$ for $N \ge 1$} \label{5.39}
\end{align}
(we use (\ref{5.37}) with $B=0$ and ${\rm det}_2(I) = 1$ for this last fact).

\pagebreak
Combined with (\ref{5.33}), (\ref{5.34}) the claim (\ref{5.12}) then follows.

\medskip
There remains to prove (\ref{5.35}) and (\ref{5.36}). The bound (\ref{5.35}) follows from (\ref{5.28}) in a straightforward fashion, and we concentrate on (\ref{5.36}). We pick $\gamma > 0$. The sum in (\ref{5.36}) equals $J_1 + J_2$, where $J_1$ collect the terms where $|y-y'| > \gamma$ and $I_2$ the terms where $|y-y'| \le \gamma$. When $N$ is large, $G_\ve(y-y') = G(y-y')$ for all $|y-y'| > \gamma$ (see (\ref{5.22})), $G(y-y') - g_N(y-y')$ converges uniformly to zero on this set by (\ref{1.7}). We thus find that $\lim_N J_1 = 0$. As for the second term $J_2$, by (\ref{5.28}) we have
\begin{equation}\label{5.40}
\overline{\lim\limits_N} \; J_2 \le \ov{\lim\limits_N} \;\mbox{\f $\dis\frac{c(\Lambda,\rho)}{N^3}$} \;\Big(N^2 + \dsl_{z \in \IL_N, 0 < |z| \le \gamma} \; \mbox{\f $\dis\frac{1}{|z|^2}$}\Big) \le c'(\Lambda, \rho) \,\gamma .
\end{equation}
Letting $\gamma$ tend to $0$, we obtain (\ref{5.36}). This completes the proof of (\ref{5.12}) and hence of Theorem \ref{theo5.1}.
\end{proof} 

\begin{remark}\label{rem5.2} \rm  ~  

\medskip\n
1) Note that for $V$ continuous with compact support on $\IR^3$ one has
\begin{equation*}
\begin{array}{l}
\IE\Big[\Big(\dis\int V(y) \,\mbox{{\large :}} \Phi^2_y\mbox{{\large :}} \,dy\Big)^2\Big] = 2 \dis\int V(y)\,G^2(y-y')\,V(y') \,dy \, dy' \le
\\[1ex]
c\,\|V\|_{L^1(\IR^d)}\big(\|V\|_{L^\infty(\IR^d)} + \|V\|_{L^1(\IR^3)}\big),
\end{array}
\end{equation*}

\medskip\n
and a similar bound holds for $\IE [(\int(V(y)\,\Phi_y \,dy)^2] = \int V(y) \,G(y-y') \,V(y')\,dy\,dy'$.

\medskip
We can thus extend the linear map sending $V$ to $\int V(y)\, \mbox{{\large :}} (\Phi_y + \sqrt{2 \alpha}\big)^2\mbox{{\large :}}  \,dy \in L^2(P^G)$ by continuity to $\cS(\IR^3)$ for any $\alpha \ge 0$. By the regularization theorem, see Theorem 2.3.2, p.~24 of \cite{Ito84}, one can find a version denoted by $\mbox{{\large :}} (\Phi + \sqrt{2 \alpha})^2\mbox{{\large :}}$, which is an $\cS'(\IR^3)$-valued random variable defined on the canonical space where $P^G$ is defined such that for any $V \in \cS(\IR^3)$, 
\begin{equation*}
\mbox{$P^G$-a.s., $\big\langle \mbox{{\large :}} \big(\Phi + \sqrt{2 \alpha}\big)^2\mbox{{\large :}} , V\rangle = \dis\int V(y) \,\mbox{{\large :}} \big(\Phi_y + \sqrt{2 \alpha}\big)^2\mbox{{\large :}} \,dy$}.
\end{equation*}

\medskip\n
2) The intensity measure of $\cL_\alpha$ equals $\alpha\,dy$ (see (\ref{2.39})), and by similar considerations as in Remark \ref{rem3.4} we can also view $\cL_\alpha$ (after a possible redefinition on a set of measure $0$) as an $\cS'(\IR^3)$-valued random variable defined on $(\Omega, \cA, \IP)$. \hfill $\square$
\end{remark}
             
As a consequence of Theorem \ref{theo5.1} we obtain an isomorphism theorem for the occupation measure of Brownian interlacements on $\IR^3$, which arises as the scaling limit (with suitable counter terms) of (\ref{0.6}):

\begin{corollary}\label{cor5.3} ($d=3, \alpha \ge 0$)

\medskip
Under $P^G \otimes \IP$ one has the identity in law on $\cS'(\IR^3)$:
\begin{equation}\label{5.41}
\fr \;\mbox{{\rm \large :}} \Phi^2 \mbox{{\rm \large :}}  + \cL_\alpha \stackrel{\rm law}{=} \fr \; \mbox{{\rm \large :}} \big(\Phi + \sqrt{2 \alpha}\big)^2\mbox{{\rm\large :}} 
\end{equation}
\end{corollary}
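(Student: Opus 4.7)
The plan is to promote the ``tested'' identity (\ref{5.6}) of Theorem \ref{theo5.1}, which is valid for every continuous compactly supported test function $V$ on $\IR^3$, to an identity in law on $\cS'(\IR^3)$. Remark \ref{rem5.2} already supplies $\cS'(\IR^3)$-valued representatives of all three terms in (\ref{5.41}): the two Wick-squared fields $\fr\,\mbox{{\large :}}\Phi^2\mbox{{\large :}}$ and $\fr\,\mbox{{\large :}}(\Phi+\sqrt{2\alpha})^2\mbox{{\large :}}$ arise via the continuous linear maps $V\mapsto\int V(y)\,\mbox{{\large :}}(\Phi_y+\sqrt{2\alpha})^2\mbox{{\large :}}\,dy$ from $\cS(\IR^3)$ to $L^2(P^G)$, combined with the regularization theorem, while $\cL_\alpha$ is likewise recast as an $\cS'(\IR^3)$-valued random variable of intensity $\alpha\,dy$.

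The first step is to extend (\ref{5.6}) from continuous compactly supported test functions to every $V\in\cS(\IR^3)$ by an elementary cutoff argument. Fix $\chi\in C_c^\infty(\IR^3)$ with $0\le\chi\le 1$ and $\chi=1$ on $[-1,1]^3$, and set $V_L(y)=\chi(y/L)V(y)$, so that each $V_L$ is continuous and compactly supported and Theorem \ref{theo5.1} applies to it. As $L\r\infty$, the $L^2$-continuity bound recalled in Remark \ref{rem5.2}, 1) gives $\|\int(V-V_L)(y)\,\mbox{{\large :}}(\Phi_y+\sqrt{2\alpha})^2\mbox{{\large :}}\,dy\|_{L^2(P^G)}\r 0$ (and similarly without the $\sqrt{2\alpha}$-shift), while (\ref{2.39}) and dominated convergence yield $\IE[|\langle\cL_\alpha,V-V_L\rangle|]\le\alpha\int|V-V_L|(y)\,dy\r 0$. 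Passing to the limit in the distributional identity (\ref{5.6}) applied to $V_L$ therefore produces the same identity for every $V\in\cS(\IR^3)$.

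The second step is to upgrade this equality of one-dimensional marginals to an equality of laws on $\cS'(\IR^3)$ via bilinearity and the Cram\'er-Wold device. For any $V_1,\dots,V_n\in\cS(\IR^3)$ and real scalars $t_1,\dots,t_n$, applying the extended (\ref{5.6}) to $V=\sum_i t_i V_i$ and using that all three pairings are linear in $V$ yields
\begin{equation*}
\dsl_i t_i\Big(\fr\big\langle \mbox{{\large :}}\Phi^2\mbox{{\large :}},V_i\big\rangle+\big\langle \cL_\alpha,V_i\big\rangle\Big)\stackrel{\rm law}{=}\dsl_i t_i\cdot\fr\big\langle \mbox{{\large :}}(\Phi+\sqrt{2\alpha})^2\mbox{{\large :}},V_i\big\rangle,
\end{equation*}
which forces the joint laws of the vectors $(\langle\cdot,V_1\rangle,\dots,\langle\cdot,V_n\rangle)$ to coincide on both sides. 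Since $\cS'(\IR^3)$ is endowed with the cylindrical $\sigma$-algebra generated by pairings with Schwartz functions (see the discussion above (\ref{1.15})), this amounts to (\ref{5.41}). The main difficulty is really already absorbed into Theorem \ref{theo5.1} and Remark \ref{rem5.2}: it is the renormalization required to interpret $\mbox{{\large :}}\Phi^2\mbox{{\large :}}$ as an honest $\cS'(\IR^3)$-valued field, which relies on the local square-integrability of $G$ in three dimensions and thereby singles out $d=3$; once this input is in hand, the passage from (\ref{5.6}) to (\ref{5.41}) needs nothing beyond the approximation above and the Cram\'er-Wold reduction, the independence of $\Phi$ and $\cL_\alpha$ under $P^G\otimes\IP$ being preserved throughout.
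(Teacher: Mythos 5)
Your proposal is correct and follows essentially the same route as the paper: extend the tested identity (\ref{5.6}) from continuous compactly supported $V$ to all $V\in\cS(\IR^3)$ via the $L^2$-continuity supplied by Remark \ref{rem5.2} (together with the intensity bound on $\cL_\alpha$), and then pass from equality of one-dimensional marginals to equality in law on $\cS'(\IR^3)$ using linearity in $V$. The paper's proof is simply a terser rendering of the same two steps, leaving the cutoff approximation and the Cram\'er--Wold reduction implicit in the phrases ``By Remark \ref{rem5.2} \dots'' and ``(\ref{5.41}) easily follows.''
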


\begin{proof}
By Remark \ref{rem5.2} we can obtain the identity (\ref{5.6}) for any $V \in \cS(\IR^3)$. Hence under $P^G \otimes \IP$, the random variables $\langle \frac{1}{2} \, \mbox{{\large :}} \Phi^2\mbox{{\large :}}  + \cL_\alpha, V \rangle$ and $\langle \frac{1}{2} \,\mbox{{\large :}} (\Phi + \sqrt{2 \alpha})^2 \mbox{{\large :}},  V\rangle$ have the same distribution for each $V \in \cS(\IR^3)$, and (\ref{5.41}) easily follows. 
\end{proof}

\begin{remark}\label{rem5.4} \rm In the context of Brownian loops on $\IR^d$, $1 \le d \le 3$ (suitably killed when $d = 1,2$), Le Jan, see p.104 of \cite{Leja12}, showed that $\frac{1}{2} \,\mbox{{\large :}} \Phi^2\mbox{{\large :}} $ (or its corresponding object when $d = 1,2$) has the same law as the renormalized occupation field of a Poisson cloud of Brownian loops with intensity parameter $\alpha = \frac{1}{2}$.

\medskip
Observe that here, in (\ref{5.41}), no renormalization on $\cL_\alpha$ is needed (and $\alpha$ varies over $\IR_+)$. Only terms involving the free field need renormalization. \hfill $\square$

\end{remark}

\newpage

\end{document}